\documentclass[11pt]{amsart}

\usepackage{amscd,amssymb,amsopn,amsmath,amsthm,mathrsfs,graphics,amsfonts,enumerate,verbatim,calc,enumerate}
\usepackage[all]{xy}
\usepackage[shortlabels]{enumitem}
\usepackage{url}
\newlist{steps}{enumerate}{1}

\usepackage[OT2,OT1]{fontenc}
\newcommand\cyr{%
\renewcommand\rmdefault{wncyr}%
\renewcommand\sfdefault{wncyss}%
\renewcommand\encodingdefault{OT2}%
\normalfont
\selectfont}
\DeclareTextFontCommand{\textcyr}{\cyr} 

\usepackage{amssymb,amsmath}

\DeclareFontFamily{OT1}{rsfs}{}
\DeclareFontShape{OT1}{rsfs}{n}{it}{<-> rsfs10}{}
\DeclareMathAlphabet{\mathscr}{OT1}{rsfs}{n}{it}

\topmargin=0in
\oddsidemargin=0in
\evensidemargin=0in
\textwidth=6.5in 
\textheight=8.5in

\numberwithin{equation}{section}
\hyphenation{semi-stable} 
 
\newtheorem{theorem}{Theorem}[section]
\newtheorem{lemma}[theorem]{Lemma}
\newtheorem{proposition}[theorem]{Proposition}
\newtheorem{corollary}[theorem]{Corollary}
\newtheorem{claim}[theorem]{Claim}

\newtheorem{Problem}{Problem}

\newtheorem{conjecture}[theorem]{Conjecture}

\theoremstyle{definition}
\newtheorem{definition}[theorem]{Definition}
\newtheorem{remark}[theorem]{Remark}

\newtheorem{example}[theorem]{Example}

\theoremstyle{remark}

\newcommand{\Spec}{\operatorname{Spec}}

\newcommand{\gr}{\operatorname{gr}}

\newcommand{\id}{\operatorname{id}}

\newcommand{\Tor}{\operatorname{Tor}}

\newcommand{\Char}{\operatorname{char}}

\newcommand{\depth}{\operatorname{depth}}

\newcommand{\Frac}{\operatorname{Frac}}

\newcommand{\pr}{\operatorname{pr}}

\newcommand{\Pic}{\operatorname{Pic}}
\newcommand{\Proj}{\operatorname{Proj}}

\newcommand{\Alb}{\operatorname{Alb}}

\newcommand{\fm}{\frak{m}}
\newcommand{\fp}{\frak{p}}
\newcommand{\fq}{\frak{q}}
\newcommand{\fa}{\frak{a}}

\newcommand{\fn}{\frak{n}}
\newcommand{\fM}{\frak{M}}



\begin{document}
\title[On local rings without small Cohen-Macaulay algebras in mixed characteristic]
{On local rings without small Cohen-Macaulay algebras in mixed characteristic}

\author[K.Shimomoto]{Kazuma Shimomoto}
\address{Department of Mathematics, Tokyo Institute of Technology, 2-12-1 Ookayama, Meguro, Tokyo 152-8551, Japan}
\email{shimomotokazuma@gmail.com}

\author[E.Tavanfar]{Ehsan Tavanfar}
\address{School of Mathematics, Institute for Research in Fundamental Sciences (IPM), P. O. Box: 19395-5746, Tehran, Iran}
\email{tavanfar@ipm.ir}

\thanks{2020 {\em Mathematics Subject Classification\/}: 13A35, 13B22, 13B40, 14B07, 14F30, 14J28. \\ The research of the first author was partially supported by JSPS Grant-in-Aid for Scientific Research(C) 23K03077.\\
The research of the second author was supported by a grant from IPM}

\keywords{Absolute integral closure, Cohen-Macaulay ring, deformation theory, $p$-adic cohomology, small Cohen-Macaulay conjecture}


\begin{abstract} 
For any $d\ge 4$, by deformation theory of schemes, we present examples of (complete or excellent) $d$-dimensional mixed characteristic normal local domains admitting no small Cohen-Macaulay algebra, but admitting instances of small (maximal) Cohen-Macaulay modules. It is  also shown that a graded normal domain   over a field whose Proj is an Abelian variety admits a graded small (maximal) Cohen-Macaulay module.
\end{abstract}

\maketitle
\tableofcontents

\section{Introduction}

The notion of Cohen-Macaulay modules has been occupying a prominent status in the study of commutative rings, algebraic geometry, and representation theory. 
Recall that a \textit{small Cohen-Macaulay module} (or a maximal Cohen-Macaulay module in the literature) over a Noetherian local ring $(R,\fm)$ is a nonzero finitely generated $R$-module $S$ such that $\depth(S)=\dim(R)$. When $S$ is an $R$-algebra, we say that $S$ is a \textit{small Cohen-Macaulay algebra}. A (not necessarily finitely generated) module $M$ over $(R,\fm)$ is a \textit{big Cohen-Macaulay $R$-module} if some system of parameters of $R$ is a regular sequence on $M$ and $M \ne \fm M$. A big Cohen-Macaulay module is \textit{balanced} if every system of parameters of $R$ satisfies the above property. A result of Holm \cite{CMModules} connects small Cohen-Macaulay modules with big Cohen-Macaulay modules. Namely, every balanced big Cohen-Macaulay module over a  Cohen-Macaulay local ring $(R,\fm)$ is a direct limit of small Cohen-Macaulay $R$-modules. This result also reveals the abundance of Cohen-Macaulay modules over a Cohen-Macaulay local ring. For this reason, there is a huge amount of research articles on the representation theory on Cohen-Macaulay modules.

However, it is unknown whether a general complete local ring admits a small Cohen-Macaulay module; recall that a local domain admitting a small Cohen-Macaulay module has to be universally catenary.

\begin{conjecture}[Hochster, small Cohen-Macaulay conjecture (1970s)]
Every complete local ring admits a small Cohen-Macaulay module.
\end{conjecture}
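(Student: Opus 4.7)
The plan is to reduce Hochster's conjecture to the case of complete local normal domains and then attempt to construct a small Cohen-Macaulay module by carving one out of a big Cohen-Macaulay algebra. As a first step, standard reductions show one may assume $R$ is a complete local domain: if $P$ is a minimal prime of $R$ with $\dim R/P = \dim R$, then any small Cohen-Macaulay $R/P$-module is automatically a small Cohen-Macaulay $R$-module, and by the Cohen structure theorem the normalization of a complete local domain is module-finite, so one can further assume $R$ is normal.

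Next, one would invoke the existence of a balanced big Cohen-Macaulay $R$-algebra $B$, now known in every characteristic (Hochster--Huneke in equal characteristic and Andr\'e in mixed characteristic via perfectoid techniques). Holm's theorem realizes every balanced big Cohen-Macaulay module over a Cohen-Macaulay base as a direct limit of small Cohen-Macaulay modules, but that hypothesis is unavailable here. The natural attempt is therefore to produce a finitely generated $R$-submodule (or quotient) of $B$ whose depth equals $\dim R$. One concrete line would be Koszul-theoretic: choose a system of parameters $\underline{x}$ for $R$, examine the directed system of finitely generated $R$-submodules $M \subset B$, and isolate one for which the Koszul cohomology $H^i(\underline{x}; M)$ vanishes for $i < \dim R$. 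A parallel, and perhaps more hopeful, avenue is deformation-theoretic: find a regular parameter $f \in \fm$ such that $R/fR$ admits a small Cohen-Macaulay module $\overline{M}$ (by induction on dimension), and try to lift $\overline{M}$ to a small Cohen-Macaulay $R$-module by controlling the obstruction class in $\Ext^2_R$.

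The main obstacle, and the reason this conjecture has remained open since the 1970s, is precisely the passage from big to small: the known balanced big Cohen-Macaulay algebras are extraordinarily far from being module-finite, and there is no Noetherian mechanism that extracts a finitely generated submodule of the right depth. The present paper aggravates the situation by ruling out, for $d \ge 4$ in mixed characteristic, any small Cohen-Macaulay algebra intermediary, so one cannot hope to descend from a small Cohen-Macaulay algebra to the module level. The deformation approach likewise suffers because lifting of small Cohen-Macaulay modules can genuinely fail along a regular element, and controlling the $\Ext^2_R$-obstruction would seem to require deep new input, plausibly from mixed-characteristic $p$-adic cohomology (prismatic or perfectoid methods) or from a refined structure theory of the absolute integral closure $R^+$. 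Any serious attempt on the conjecture will have to confront this gap between the abundance of big Cohen-Macaulay objects and the scarcity of finite ones.
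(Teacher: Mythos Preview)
The statement you were asked to address is not a theorem in the paper; it is Hochster's small Cohen-Macaulay conjecture, which the paper states as an open problem and explicitly does \emph{not} prove. Indeed, the paper says in the introduction that ``since the conjecture was proposed, there has been little progress towards either positive or negative direction,'' and the paper's contributions go in a different direction entirely: it constructs complete local normal domains in mixed characteristic that admit no small Cohen-Macaulay \emph{algebra} (while still admitting small Cohen-Macaulay modules), thereby sharpening the landscape around the conjecture without resolving it.

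Your write-up is not a proof, and to your credit you do not pretend otherwise: after the standard reductions to a complete normal domain, you sketch two heuristic strategies (extracting a finitely generated submodule of a big Cohen-Macaulay algebra with the correct Koszul vanishing, or lifting a small Cohen-Macaulay module along a regular element) and then correctly identify why each breaks down. That discussion is reasonable as a survey of obstacles, but it should not be labeled a proof proposal. There is no ``paper's proof'' to compare it to, because none exists; the conjecture remains open.

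If the assignment genuinely asked for a proof of this statement, the only honest response is to say that the statement is a conjecture and cannot be proved with current techniques. If instead you meant to engage with one of the paper's actual theorems (e.g., Theorem~\ref{non-existence1} or Theorem~\ref{SegreIso2}), you should redirect your effort there.
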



Since the conjecture was proposed, there has been little progress towards either positive or negative direction. In contrast to our little knowledge about existence/non-existence of small Cohen-Macaulay modules, big Cohen-Macaulay modules (algebras) are now known to exist  in general for every local ring. In fact, for residually prime characteristic rings, the absolute integral closure does the job:

\begin{theorem}[Bhatt, Hochster-Huneke; see \cite{AbsoluteIntegral}, \cite{BigCMAlg}, and \cite{AbsoluteInt}]
\label{TheoremR+BigCM}
Let $(R,\fm)$ be an excellent local domain with residue characteristic $p>0$. Let $x_1,\ldots,x_d$ be a system of parameters of $R$ (we take $x_1=p$ in the mixed characteristic case). Then $x_1,\ldots,x_d$ is a regular sequence on the absolute integral closure of $R$.
\end{theorem}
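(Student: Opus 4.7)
The plan is to split the argument by residue characteristic behavior: the equal characteristic $p>0$ case and the mixed characteristic case use rather different toolkits, and I would handle them separately.

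For equal characteristic $p>0$, I would follow the Hochster--Huneke strategy via Frobenius. It suffices to show that for every module-finite extension $R\subseteq S\subseteq R^+$ and every relation $s\cdot x_{k+1}=\sum_{i=1}^{k}s_i x_i$ in $S$, there is a further module-finite extension $S\subseteq T\subseteq R^+$ with $s\in(x_1,\ldots,x_k)T$. Iterating Frobenius on the relation yields $s^{p^e}x_{k+1}^{p^e}\in(x_1^{p^e},\ldots,x_k^{p^e})S$ for every $e\ge 0$, which places $s$ in the tight closure of $(x_1,\ldots,x_k)$. One then invokes the (deep) theorem that tight closure and plus closure of parameter ideals coincide in excellent local domains of characteristic $p$, to produce the desired $T$ inside $R^+$. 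Collecting this over $k$ shows that $x_1,\ldots,x_d$ is regular on $R^+$.

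For mixed characteristic the argument is substantially harder and I would invoke Bhatt's perfectoid machinery. My plan is: (i) pass to the $p$-adic completion $\widehat{R^+}$, which is perfectoid; (ii) use André's perfectoid Abhyankar lemma to construct enough perfectoid extensions of $R$ inside $R^+$ so that the depth problem becomes an almost vanishing statement for local cohomology of a perfectoid algebra; (iii) transfer this vanishing to characteristic $p$ via the tilting correspondence, where it reduces to the equal characteristic case applied to the tilt; (iv) conclude by derived Nakayama that $p,x_2,\ldots,x_d$ is regular on $\widehat{R^+}$; (v) descend from $\widehat{R^+}$ back to $R^+$, using that torsion classes in $H^i_{\fm}(R^+)$ persist under $p$-adic completion.

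The hardest step is (iii)--(v): upgrading almost vanishing to honest vanishing. Within any single fixed perfectoid subring one only obtains $p^{1/p^\infty}$-torsion in the obstruction, and it is precisely the abundance of $p^{1/p^\infty}$-th roots that are extractable in $R^+$ (but not in a bounded perfectoid subring) that allows one to eliminate this torsion. This ``almost zero implies zero'' upgrade is the technical heart of Bhatt's argument and is the reason the statement, though formally parallel to the characteristic $p$ version, was out of reach before the development of perfectoid and prismatic methods; it also explains why one must impose $x_1=p$ rather than derive it, since Frobenius is only available after reducing modulo $p$.
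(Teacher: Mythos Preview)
The paper does not supply a proof of this theorem; it is quoted from the literature with references to Hochster--Huneke and Bhatt, and is used only as background. So there is no proof in the paper to compare your sketch against.

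That said, your equal-characteristic $p$ outline contains a genuine circularity. You propose to show $s\in(x_1,\ldots,x_k)^*$ and then invoke the identification of tight closure with plus closure for parameter ideals. But that identification (Smith's theorem) is \emph{proved} using the Hochster--Huneke result that $R^+$ is big Cohen--Macaulay, which is precisely the statement you are trying to establish. The original argument in \cite{BigCMAlg} does not go through tight closure as a black box; it proceeds by a direct finite-extension construction (their ``equational'' lemmas on modifications and colon-capturing), and the later streamlined proof of Huneke--Lyubeznik \cite{AbsoluteInt} uses a Frobenius-splitting/local-duality argument. Either of those would be an acceptable route; invoking Smith's theorem is not.

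Your mixed-characteristic outline is in the right spirit but is more of a narrative than a plan: steps (ii)--(v) each hide substantial content, and Bhatt's actual argument in \cite{AbsoluteIntegral} is organized around prismatic cohomology and a Kodaira-type vanishing statement rather than the tilting correspondence as you describe it. In particular, the ``almost zero implies zero'' upgrade is not obtained by extracting more $p$-power roots inside $R^+$; it comes from structural results on prismatic/$\mathbb{A}_{\inf}$-cohomology. If you want to cite the mixed-characteristic case rather than reprove it, that is exactly what the paper does.
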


Our main aim in this paper is to open a further (small) window to the landscape of the small Cohen-Macaulay conjecture by constructing examples (Theorem \ref{non-existence1} and Theorem \ref{SegreIso2} below) which, if not partial answers to Hochster's conjecture, they offer instances of complete mixed characteristic normal local domains without any small Cohen-Macaulay algebra, but admitting small Cohen-Macaulay modules. Particularly  in Theorem \ref{non-existence1}, we utilize the deformation theory to construct a complete local ring  in mixed characteristic satisfying our desired conclusion:

\begin{theorem}
\label{non-existence1}
Let $(V,\pi,k)$ be a complete discrete valuation ring of mixed characteristic such that $k$ is an algebraically closed field of characteristic $p>0$. Let $h:\mathcal{X} \to \Spec(V)$ be a flat surjective projective morphism with an ample line bundle $\mathcal{L}$, and the pair $(\mathcal{X},\mathcal{L})$ specializes to $(X,L)$ along the closed fiber of $h$. Define the section rings:
$$
R(\mathcal{X},\mathcal{L}):=\bigoplus_{n \ge 0}H^0(\mathcal{X},\mathcal{L}^n)~\mbox{and}~R(X,L):=\bigoplus_{n \ge 0}H^0(X,L^n).
$$
Then one can construct an example of a complete local normal domain $(R,\fm,k)$ of mixed characteristic with an algebraically closed residue field such that the following assertion holds.
\begin{enumerate}
\item[$\bullet$]
$R$ does not admit a small Cohen-Macaulay algebra, while it admits a small Cohen-Macaulay module. Moreover, 
$R$ 
is the completion  of  $R(\mathcal{X},\mathcal{L})$ 
along the graded maximal ideal $(\pi)+R(\mathcal{X},\mathcal{L})_{+}$, and such that $R(\mathcal{X},\mathcal{L})$ deforms $R(X,L)$.
\end{enumerate}
\end{theorem}

We will actually demonstrate this theorem twice in Section 3 by constructing two distinct families of examples. In Theorem \ref{TheoremK3SurfaceDeformation}, $\mathcal{X}$ is obtained by deforming the product of a K3 surface of finite height and the projective line. Here $\dim(R)=5$. In Theorem \ref{TheoremAbelianSurface}, $\mathcal{X}$ is the deformation of an Abelian surface (by setting $d=2$ in the statement). Here we can get $\dim(R)$ as small as $4$, but we must require $\text{char}(k)\neq 2$.

The same result concerning the non-existence of small Cohen-Macaulay algebras in positive characteristic has been proved by Bhatt \cite{SmallCM}; see also \cite[Example 5.3]{SannaiSinghGalois} for an explicit  graded  example in prime characteristic. The proof of Theorem \ref{non-existence1} is obtained by using Bhatt's criterion  via rigid cohomology (\cite{SmallCM}) as well as 
the graded isomorphism  $R(\mathcal{X},\mathcal{L})/(\pi) \cong R(X,L)$ which is proved in Proposition \ref{crucial-isom} under some conditions. So, our proof  is based on the  $p$-adic deformation of projective varieties. Here, Deligne's theorem on $K3$ surfaces in characteristic $p>0$ \cite{K3Surface} is applied in Theorem \ref{TheoremK3SurfaceDeformation}. Simiarly,   Mumford-Norman-Oort's theorem on Abelian varieties \cite{Norman} and \cite{NormanOort} is applied in the proof of Theorem \ref{TheoremAbelianSurface}. We also utilize various cohomology theories; $\ell$-adic \'etale cohomology, crystalline cohomology and so on. The authors hope that the method for lifting singularities used in the proof of Theorem \ref{non-existence1} will be useful in constructing interesting singularities in mixed characteristic from the singularities in positive characteristic (see Remark \ref{finalremark}). Also, in the course of the proof of Theorem \ref{non-existence1}, we derive the mentioned graded result in the abstract on the existence of maximal Cohen-Macaulay modules,  for which we refer to Corollary \ref{CorollaryAbelian} (see also Corollary \ref{CorollaryBlowUpOfCMadmitMCM}  for another 
result concerning the existence of maximal Cohen-Macaulay modules). 


The proof of our second class of examples presented in the following result is greatly inspired by \cite{PlusClosure}, where Heitmann studied various properties of the plus closure in mixed characteristic and he derived several outcomes about blow-up algebras of the type in the following result (see also Example \ref{ExamplePrimeNonExistenceExample} and Proposition \ref{PropositionCalabiYau}).

\begin{theorem}
\label{SegreIso2}
Let $V$ be an unramified discrete valuation ring of mixed characteristic $(0,p)$ with algebraically closed residue field $k$ of characteristic 
prime to $n$, where $n\ge 3$ is a natural number. Let $R$ be the blow-up algebra
$$
R=\big(V[X_{1},\ldots,X_{n}]/(X_{1}^{n}+\cdots+X_{n}^{n})\big)[X_{1}t,\ldots,X_{n}t]
$$
and let $\mathfrak{M}=(p,X_1,\ldots,X_n,X_1t,\ldots,X_nt)$. Then the completion of the localization $R_{\mathfrak{M}}$ does not admit any small Cohen-Macaulay algebra but its ideal $I:=(X_{1}t,\ldots,X_{n}t)$ is a small Cohen-Macaulay module.
\end{theorem}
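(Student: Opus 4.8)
The plan is to analyze the ring $R=\big(V[X_{1},\ldots,X_{n}]/(X_{1}^{n}+\cdots+X_{n}^{n})\big)[X_{1}t,\ldots,X_{n}t]$ as a mixed-characteristic lift of Heitmann's characteristic-$p$ blow-up algebra, so that the obstruction to the existence of a small Cohen--Macaulay algebra is detected by a local cohomology (or \v Cech cohomology) computation that survives reduction modulo $p$. First I would identify $R$ with the Rees-type algebra of the ideal $(X_{1},\ldots,X_{n})$ in the hypersurface $A:=V[X_{1},\ldots,X_{n}]/(X_{1}^{n}+\cdots+X_{n}^{n})$; since $\Char(k)=p\neq n$, the special fiber $A/pA=k[X_1,\ldots,X_n]/(X_1^n+\cdots+X_n^n)$ is a normal hypersurface, so $A$ is normal and $A/pA$ is reduced, whence $p$ is a nonzerodivisor and the whole package deforms from characteristic $p$. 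I would then compute $\dim R_{\mathfrak M}$ (it should be $n+1$: the $X_it$'s cut down to the exceptional fiber) and describe the ideal $I:=(X_1t,\ldots,X_nt)$ explicitly as the module of global sections of the structure sheaf of the blow-up twisted appropriately, so that $I$ is visibly a reflexive, hence (over the $(n+1)$-dimensional normal domain, if it has depth $\ge 2$ plus the right vanishing) small Cohen--Macaulay, module.

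The core of the argument, following Heitmann's strategy in \cite{PlusClosure}, is to show that any small Cohen--Macaulay $R_{\mathfrak M}$-\emph{algebra} $B$ would force a certain cohomology class to die. Concretely: the blow-up $\Proj R$ maps to $\Spec A_{\mathfrak n}$ (where $\mathfrak n=(p,X_1,\ldots,X_n)$) contracting the exceptional Calabi--Yau-type fiber $Y\subset \mathbb P^{n-1}_k$ defined by $X_1^n+\cdots+X_n^n=0$; the nonvanishing of $H^{n-2}(Y,\mathcal O_Y)$ (equivalently, the nontriviality of the dualizing sheaf's cohomology of this smooth degree-$n$ hypersurface of dimension $n-2$ in $\mathbb P^{n-1}$, which is the canonical bundle when $n$ equals the degree---a Calabi--Yau situation) produces a nonzero element in a local cohomology module $H^{n-1}_{\mathfrak M}(R)$ or in $\operatorname{Ext}$ obstructing Cohen--Macaulayness. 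The point is that this class cannot be annihilated by passing to \emph{any} module-finite or even integral extension that is itself an algebra, because algebra extensions respect the multiplicative structure and one can run Heitmann's colon-capturing argument: a parameter relation that would have to become trivial in $B$ cannot, since the relevant cohomology of $Y$ is detected by étale or de Rham cohomology and is not killed by purely inseparable or finite covers in the relevant degree. I would reduce the mixed-characteristic statement to the special fiber by the usual deformation argument (a small CM algebra over $R_{\mathfrak M}$ would, after base change and completion, give one over $(R/pR)_{\mathfrak M}$, contradicting Heitmann's / Bhatt's result in characteristic $p$ once one checks $p$ is part of a system of parameters and the quotient is still the relevant non-CM blow-up).

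For the positive half---that $I=(X_1t,\ldots,X_nt)$ \emph{is} a small Cohen--Macaulay module---I would show $\depth_{\mathfrak M} I = \dim R_{\mathfrak M}$ directly. The idea is that $I$, as a graded module, decomposes so that its local cohomology $H^j_{\mathfrak M}(I)$ is governed by $\bigoplus_{n\ge 1} H^{j-1}(\mathrm{Bl},\mathcal O(n))$ for the blow-up, and the relevant sheaf cohomology vanishes in the middle degrees precisely because the exceptional fiber $Y$ is a smooth hypersurface with $H^i(Y,\mathcal O_Y(m))=0$ for $0<i<n-2$ and all $m$ (Bott-type vanishing for hypersurfaces), together with $H^1(X,L^n)=0$-type input on the affine part; the one potentially nonzero group $H^{n-2}(Y,\mathcal O_Y)\cong k$ is exactly what obstructs $R$ itself from being CM but, after the twist by $t$, lands in a graded piece that is pushed out of the range that would obstruct $I$. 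The main obstacle I anticipate is precisely this bookkeeping: matching the graded pieces of $H^*_{\mathfrak M}(R)$ and $H^*_{\mathfrak M}(I)$ against the sheaf cohomology of the blow-up and its exceptional divisor, and verifying that the single ``bad'' cohomology class obstructs every algebra but not the module $I$---this requires care with the grading shifts, with normality (to pass between $I$ and its reflexive hull), and with ensuring the characteristic-$p$ reduction genuinely reproduces Heitmann's example rather than a degenerate one, which is where the hypothesis $p\neq n$ and the unramifiedness of $V$ are essential.
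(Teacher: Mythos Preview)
Your plan for the small Cohen--Macaulay \emph{module} part is essentially correct in spirit and close to what the paper does, though the paper's execution is more concrete: it first reduces modulo $p$ (checking that $(pR\cap I)/pI\cong \Tor_1^R(R/pR,R/I)=0$, so $I/pI\cong I(R/pR)$), and then identifies $R/pR$ explicitly with the Segre product $\big(k[X_1,\ldots,X_n]/(X_1^n+\cdots+X_n^n)\big)\#k[A,B]$ via an elementary lemma. With that identification, the Goto--Watanabe formula for local cohomology of Segre products computes $H^{n-1}_{\mathfrak n}(S)=k$ as the single obstruction, and the long exact sequence of $0\to J\to S\to S/J\to 0$ together with the observation that $H^{n-1}_{\mathfrak n}(S)\to H^{n-1}_{\mathfrak n}(S/J)$ is injective gives $H^{n-1}_{\mathfrak n}(J)=0$. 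Your blow-up/sheaf-cohomology bookkeeping would unwind to the same computation, but the Segre identification is what makes it painless.

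There is, however, a genuine gap in your non-existence argument. You propose to reduce modulo $p$ and invoke a characteristic-$p$ obstruction (``Heitmann's/Bhatt's result''), claiming the relevant cohomology class ``is not killed by purely inseparable or finite covers.'' This is exactly the step that fails in general in characteristic $p$: finite extensions \emph{can} kill local cohomology classes (this is the whole content of $R^+$ being big Cohen--Macaulay). To make a mod-$p$ argument work you would need Bhatt's rigid-cohomology criterion, i.e.\ verify $H^i_{\mathrm{rig}}(X/W(k)[\tfrac1p])_{<1}\neq 0$ for the relevant polarized variety, which you do not address.

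The paper goes in the opposite direction: it \emph{inverts} $p$ rather than killing it. After localizing at $p$, one lands in equal characteristic zero, where the blow-up algebra over $\mathrm{Frac}(V)$ is the same Segre product over a field of characteristic zero; this is a non-Cohen--Macaulay normal domain, and in characteristic zero the normalized trace map splits every module-finite domain extension, so \emph{no} finite algebra extension can be Cohen--Macaulay. Since a small CM algebra over $\widehat{R_{\mathfrak M}}$ would localize to one over the non-CM ring $(\widehat{R_{\mathfrak M}})_p$, this gives the contradiction. Your hypothesis $p\neq n$ is used for normality, but the unramifiedness of $V$ and the ``reduction to characteristic $p$'' you emphasize are red herrings for this half of the theorem.
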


The first part of Section 2 is expository. It concerns the absolute integral closure and includes the following proposition:

\begin{proposition}
\label{non-existence0}
The following assertions hold.
\begin{enumerate}
\item\label{itm:R+nCM} (cf. Theorem \ref{TheoremR+BigCM})
Let $(R,\fm)$ be a complete local domain of mixed characteristic $p>0$ of dimension  $d\ge 4$. Let $p,x_2,\ldots,x_d$ be a system of parameters of $R$ and let $R^+$ be the absolute integral closure. Then $x_2,\ldots,x_d$ does not form a regular sequence on $R^+$.

\item\label{itm:nCMFiniteAlgebra}
For any $d\ge 4$ and any $d$-dimensional complete local normal domain $(R,\fm,k)$ of mixed characteristic 
there exists a   finite  (complete) local normal domain extension  $R'$ of $R$    such that $R'$ does not admit a small Cohen-Macaulay algebra.
\end{enumerate}
\end{proposition}

In characteristic $0$, it is quite easy to obtain the same result  using the normalized trace map. The proof of Proposition \ref{non-existence0} is reduced to the failure of the balanced Cohen-Macaulay property on the absolute integral closure of a complete local domain of mixed characteristic in dimension at least $4$, which itself is deduced by reduction to equal-characteristic $0$. This result is more or less considered as folklore. However, as the authors were not able to find an actual proof  in the literature and it seems that the statement of Proposition \ref{non-existence0} has not appeared in print, we decided to give a proof. The new result of the first subsection of Section 2 is Corollary \ref{BBCMDim3}.



\section{Failure of the Cohen-Macaulay property for small algebras}
\label{SectionCM}

\subsection{Some facts concerning the absolute integral closures}
Let $R$ be an integral domain with field of fractions $K$. Let $\overline{K}$ be a fixed algebraic closure of $K$. Then the \textit{absolute integral closure} of $R$ is defined to be the integral closure of $R$ in $\overline{K}$ and we denote this ring by $R^+$.

%

It is perhaps necessary to  begin with the remark below on the statement of Proposition \ref{non-existence0} which is the first result of our paper. 

\begin{remark}
\begin{itemize}
\item
Concerning the first part of   Proposition \ref{non-existence0}, if one only wants to  discuss the existence of an instance of a  ring $R$  as in Proposition \ref{non-existence0}(\ref{itm:R+nCM}) with $x_2,\ldots,x_d$ not a regular sequence on $R^+$, then in view of Example \ref{ExamplePrimeNonExistenceExample}  this   holds by an easier observation than the proof of Proposition \ref{non-existence0}(\ref{itm:R+nCM}). However, if one wants to show  that this phenomenon holds for an arbitrary ring $R$ as in the statement, then the relatively long proof of Proposition \ref{non-existence0}(\ref{itm:R+nCM}) seems to us to be necessary.

\item 
In contrast to the equal characteristic zero case, by \cite[Example 3.2]{SridharExistence}, the statement of Proposition \ref{non-existence0}(\ref{itm:nCMFiniteAlgebra})  can not be improved by saying that $R$,  itself necessarily, does not admit a small Cohen-Macaulay algebra. 
\end{itemize}
\end{remark}



In some other articles, the following proof  is cited as  ``Proof of Proposition 2.1", which is  with respect to  the numberings of the results of the  earlier versions of our paper.
\begin{proof}[Proof of Proposition \ref{non-existence0}(1)] 
Let $(V,p,k)$ be a coefficient ring for $R$ and let
$$
A=V[[x_2,\ldots,x_d]]
$$
be the Noether normalization of $R$ with a regular system of parameters $p,x_2,\ldots,x_d$. Consequently, $\big(A,(p,x_2,\ldots,x_d)\big)$ is a complete regular local ring over which $R$ is module-finite, and $A^+=R^+$. Let $\fp:=(x_2,\ldots,x_d)A$, a height $d-1$ prime ideal of $A$.


Setting  $K:=\Frac(V)$ and
$$
A':=K[X_2,\ldots,X_d],~\fq:=(X_2,\ldots,X_d),
$$
the induced homomorphism $K \to A_{\fp}$ is extended to a homomorphism of regular local rings
$$
f:A'_{\fq}\overset{}{\longrightarrow} A_{\fp},~X_i\mapsto x_i.
$$
The local homomorphism $f$ is flat by virtue of \cite[II, Lemma 57]{AndreHomologie}, because
$$
\forall\ i\ge 1,~\Tor^{A'_{\fq}}_{i}(A'_{\fq}/\fq A'_{\fq},A_{\fp})\cong H_{i}(X_2,\ldots,X_d;A_{\fp})\cong H_{i}(x_2,\ldots,x_d;A_{\fp})=0.
$$


Next, we continue the proof by  proving the following claim:

\textbf{Claim:}  There is  a module-finite normal local domain extension $S$ of $A'_{\fq}$ which is not Cohen-Macaulay.

\textit{Proof of the claim.} We  argue as follows:

\begin{enumerate}
\item[$\bullet$]
We consider the Segre product (cf. Lemma \ref{LemmaBlowUpAndSegreProduct})
$$
B:=\big(K[a_2,\ldots,a_d]/(a_2^n+\cdots+a_d^n)\big)\ \# (K[b,c]),~\mbox{where}~n\ge d-1.
$$

\item[$\bullet$]
The ring $B$ is a standard graded $K$-algebra which has dimension $d-1$ in view of \cite[Theorem (4.2.3)(i)]{GotoWatanabeOnGraded}.

\item[$\bullet$]
By our convention that $n\ge d-1$, $K[a_2,\ldots,a_d]/(a_2^n+\cdots+a_d^n)$ has non-negative $a$-invariant $n-d+1$ by \cite[14.5.27 Exercise]{BrodmannSharpLocal}. Consequently, $B$ is not Cohen-Macaulay in the light of \cite[Theorem (4.2.3)(ii)]{GotoWatanabeOnGraded}.

\item[$\bullet$]
Since $K[b,c]$ and $K[a_2,\ldots,a_d]/(a_2^n+\cdots+a_d^n)$ are both geometrically normal $K$-algebras by \cite[Tag 0380 and Tag 037Z]{StacksProject} (because the field $K$ has characteristic zero), we can deduce that $B$ is a normal domain from \cite[Tag 06DF]{StacksProject} and \cite[Remark (4.0.3) (iv)]{GotoWatanabeOnGraded}.

\item [$\bullet$]
We consider $B$ as an $A'$-algebra by sending the variables $X_2,\ldots,X_d$ to a homogeneous system of parameters of $B$. Then $B$ is module-finite over $A'$ (see e.g. \cite[Theorem 1.5.17]{BrunsHerzogCohenMacaulay}).

\item[$\bullet$]
Finally, setting
$$
S:=B_{\fm_B},
$$
where $\fm_B$ is the unique homogeneous maximal ideal, $S$ is a normal non-Cohen-Macaulay module-finite extension domain of $A'_{\fq}$, as needed. Only the finiteness of $S$ over $A'_{\fq}$ needs some explanation: Notice that $B_{\fq}$ is a local ring with maximal ideal $\fm_BB_{\fq}$, because any maximal ideal of $B_{\fq}$ contracts to $\fq A'_{\fq}$ by \cite[Lemma 2, page 66]{Matsumura} and hence it contains a homogeneous system of parameters for $B$. Consequently, $B_{\fq}\cong B_{\fm_B}=S$ is module-finite over $A'_{\fq}$.
\end{enumerate}

Let $S$ be  as in the statement of the above claim. As the base change $f_S:S\rightarrow S\otimes_{A'_{\fq}}A_{\fp}$ is faithfully flat and $S$ has dimension $d-1$, we have $\dim(S\otimes_{A'_{\fq}}A_{\fp})\ge d-1$.  On the other hand,  the ring homomorphism $A_{\fp}\rightarrow S\otimes_{A'_{\fq}}A_{\fp}$ is module-finite, as so is $A'_{\fq}\rightarrow S$. From these two facts, we conclude that $\dim(S\otimes_{A'_{\fq}}A_{\fp})=d-1$ and $S\otimes_{A'_{\fq}}A_{\fp}$ is a  module-finite extension of $A_\fp$. Let $\fn$ be an arbitrary maximal ideal of $S\otimes_{A'_{\fq}}A_{\fp}$. Then,  since it is an integral extension of $A_{\fp}$, $\fn$ contains $(1\otimes x_2,\ldots,1\otimes x_d)$, from which we deduce that it contains $\fm_S (S\otimes_{A'_{\fq}}A_{\fp})$, where $\fm_S$ is the unique maximal ideal of $S$. But $\fm_S(S\otimes_{A'_{\fq}}A_{\fp})$ is a maximal ideal of $S\otimes_{A'_{\fq}}A_{\fp}$, because $A'_{\fq} \rightarrow A_{\fp}$ induces an isomorphism on residue fields and the maximal ideal of the target is generated by the maximal ideal of the domain. Therefore  $\fn=\fm_S(S\otimes_{A'_{\fq}}A_{\fp})$. Thus  $\big(S\otimes_{A'_{\fq}}A_{\fp},\fn\big)$ is a local ring that is a local flat extension of the non-Cohen-Macaulay ring $S$. Consequently  $\big(S\otimes_{A'_{\fq}}A_{\fp},\fn\big)$  is also a non-Cohen-Macaulay local  ring.

The completion of the base change $f_S:S\rightarrow S\otimes_{A'_{\fq}}A_{\fp}$ of $f$ is an isomorphism by \cite[Tag 0AGX]{StacksProject}, because it is a flat homomorphism inducing an isomorphism on residue fields and the maximal ideal of the target is generated by the maximal ideal of the domain. Since $S$ is an excellent  normal local domain, $\widehat{S}\cong \widehat{S\otimes_{A'_{\fq}}A_{\fp}}$ is normal, a fortiori $S\otimes_{A'_{\fq}}A_{\fp}$ is also a normal local domain. 
     
Being a module-finite domain extension of $A_{\fp}$, we get the second isomorphism in
\begin{equation}
\label{EquationAbsoluteIntegralClosure}
(A^+)_{\fp}\cong (A_{\fp})^+\cong (S\otimes_{A'_{\fq}}A_{\fp})^+.
\end{equation} 
     
Since $S\otimes_{A'_{\fq}}A_{\fp}$ is not Cohen-Macaulay,  the image of $x_2,\ldots,x_d$ is not a regular sequence on $S\otimes_{A'_{\fq}}A_{\fp}$ (it forms a system of parameters). For any module-finite extension domain $S'$ of $S\otimes_{A'_{\fq}}A_{\fp}$, it is well-known that the normalized trace map splits the inclusion $S\otimes_{A'_{\fq}}A_{\fp}\rightarrow S'$  because $S\otimes_{A'_{\fq}}A_{\fp}$ is normal. The splitting implies that any bad relation among the images of $x_2,\ldots,x_d$ in $S\otimes_{A'_{\fq}}A_{\fp}$ preventing regularity remains a (non-trivial) bad relation on any such $S'$. Hence $x_2,\ldots,x_d$ is not a regular sequence on $$(S\otimes_{A'_{\fq}}A_{\fp})^+\overset{\text{(\ref{EquationAbsoluteIntegralClosure})}}{\cong}(A^+)_{\fp}\cong (R^+)_{\fp}$$ as well. In particular it is not a regular sequence on $R^+$ as was to be proved.


(\ref{itm:nCMFiniteAlgebra})
Fix some $d\ge 4$ and some complete local normal domain $R$ as in the statement. To obtain a contradiction, assume that every finite  normal local domain extension of  $R$ has a small Cohen-Macaulay algebra. Let $R'$ denote such a finite normal (complete local) domain extension and consider the inclusion map $R' \to R^+$. Then $R'$ has a small Cohen-Macaulay algebra $T$ and let $P \subset T$ be a minimal prime such that $R' \cap P=(0)$. Then $R' \to T/P$ is a module-finite extension of complete local domains. This gives a factorization:
$$
R' \to T \twoheadrightarrow T/P \hookrightarrow R^+.
$$
Using this composite map, it follows that any relation among any (part of) a system of parameters of $R'$ becomes trivial in $R^+$. Since $R^+$ is a filtered colimit of normal local 
domains that are module-finite over $R$, the above fact shows that every (part of)  a system of parameters of $R$ is regular on $R^+$. But this provides a contradiction to part (\ref{itm:R+nCM}).
\end{proof}

Let us again recall the following remarkable result (see \cite{AbsoluteIntegral}).

\begin{theorem}[Bhatt]
\label{Bhattabsolute}
Let $(R,\fm)$ be an excellent local domain of mixed characteristic with a system of parameters $p,x_2,\ldots,x_d$. Then $x_2,\ldots,x_d$ forms a regular sequence on $R^+/p^nR^+$ for any integer $n>0$.
\end{theorem}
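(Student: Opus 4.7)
The plan is to reduce the claim that $x_2,\ldots,x_d$ is regular on $R^+/p^nR^+$ to a vanishing of local cohomology, and then to the base case $n=1$. Since $R^+/p^nR^+$ is annihilated by $p^n$, the regularity condition is equivalent to the vanishing $H^i_{\fm}(R^+/p^nR^+)=0$ for $i<d$ (equivalently, to the vanishing of Koszul cohomology $H^i(x_2,\ldots,x_d;R^+/p^nR^+)$ for $i>0$). After the usual reductions --- faithfully flat base change to enlarge the residue field and completion --- one may assume that $R$ is a complete local normal domain with algebraically closed residue field. The short exact sequence
\[
0 \to R^+/pR^+ \xrightarrow{p^{n-1}} R^+/p^nR^+ \to R^+/p^{n-1}R^+ \to 0
\]
together with its long exact sequence on local cohomology, combined with induction on $n$, then reduces everything to the single statement $H^i_\fm(R^+/pR^+)=0$ for $i<d$.

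For the base case $n=1$, the approach I would take is perfectoid in nature. The $p$-adic completion $\widehat{R^+}$, after a harmless enlargement ensuring the presence of compatible $p^{1/p^m}$ roots, is a perfectoid ring. I would then invoke Andr\'e's flatness lemma together with Scholze's almost purity theorem to construct an ind-almost-\'etale perfectoid cover of $\widehat{R}$ whose structure sheaf has almost vanishing local cohomology in the relevant range. The delicate step --- and the heart of the theorem --- is upgrading this almost vanishing to genuine vanishing inside $R^+$: each Koszul obstruction to regularity lives in a finitely generated submodule coming from some intermediate module-finite extension of $R$, and on such Noetherian-sized data the ideal of almost-zero elements acts nilpotently, so the bad relations are in fact killed already in $R^+$ itself.

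The main obstacle is precisely this base case: the equal-characteristic Hochster-Huneke argument exploits Frobenius-stability of $R^+$, which fails in mixed characteristic, so genuine $p$-adic Hodge-theoretic input is unavoidable. An alternative and arguably cleaner formulation goes through prismatic cohomology: the perfect prism attached to $\widehat{R^+}$ provides a Frobenius-equivariant comparison whose reduction modulo $p$ encodes the desired vanishing directly, but the almost-to-honest upgrade remains the technical crux in any approach. A secondary subtlety, easy to overlook, is that one must verify compatibility of the perfectoid construction with the passage to $R^+$ (as opposed to merely to $\widehat{R^+}$); this is handled by noting that local cohomology of $p$-power torsion modules is insensitive to $p$-adic completion in the degrees of interest.
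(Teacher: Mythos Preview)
The paper does not prove this theorem at all: it is stated as a result of Bhatt and is simply cited from \cite{AbsoluteIntegral} with no argument given. There is therefore no ``paper's own proof'' to compare your proposal against.

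As for your sketch itself: the reduction from general $n$ to $n=1$ via the short exact sequence
\[
0 \to R^+/pR^+ \xrightarrow{\,p^{n-1}\,} R^+/p^nR^+ \to R^+/p^{n-1}R^+ \to 0
\]
is fine and standard. Your outline of the base case correctly identifies the architecture of Bhatt's actual argument --- perfectoid completion, Andr\'e's flatness lemma and almost purity to get almost vanishing, and then an almost-to-honest upgrade --- and you are right that the last step is the crux. But what you have written is a roadmap, not a proof: the sentence ``on such Noetherian-sized data the ideal of almost-zero elements acts nilpotently, so the bad relations are in fact killed already in $R^+$ itself'' is precisely the hard theorem, and nothing you have said explains \emph{why} a relation that is almost trivial in a perfectoid cover becomes honestly trivial after a further finite extension inside $R^+$. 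In Bhatt's paper this step goes through a substantial detour (the $p$-complete derived category, prismatic input, and a careful analysis of how finite covers interact with the perfectoidization), none of which is captured by the nilpotence remark. So while your high-level description is accurate, it should be labeled as a summary of the strategy in \cite{AbsoluteIntegral} rather than as a self-contained proof.
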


An almost variant of Theorem \ref{Bhattabsolute} in dimension $3$ was obtained previously by Heitmann in \cite{ExtendedPlus}. As a corollary of Theorem \ref{Bhattabsolute}, we have the following result.\footnote{By a result of Kawasaki (\cite[Corollary 1.2]{Kaw01}), it is known that any excellent local ring is a homomorphic image of a Cohen-Macaulay ring.}

\begin{corollary}\label{BBCMDim3}
(cf. Proposition \ref{non-existence0}(1)) Let $(R,\fm)$ be a $3$-dimensional excellent local domain of mixed characteristic such that $R$ is a homomorphic image of a Gorenstein local ring. Then every system of parameters of $R$ is regular on $R^+$.
\end{corollary}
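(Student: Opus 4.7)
The plan is to combine Theorem \ref{Bhattabsolute} with local-cohomology arguments, upgraded at the end using the Gorenstein-quotient hypothesis. Choose a system of parameters of $R$ of the form $p, x_2, x_3$, which exists since $p\in\fm$. By Theorem \ref{Bhattabsolute} with $n=1$, $x_2, x_3$ is a regular sequence on $R^+/pR^+$, and $p$ itself is a non-zero-divisor on the domain $R^+$, so $p, x_2, x_3$ is a regular sequence on $R^+$. Because regular sequences remain regular under replacement of elements by positive powers, $p^n, x_2^n, x_3^n$ is $R^+$-regular for every $n\ge 1$, and hence $H^i(p^n, x_2^n, x_3^n; R^+) = 0$ for $i<3$. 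Taking the filtered colimit in $n$,
\[
H^i_{\fm}(R^+) \;=\; \varinjlim_n H^i(p^n, x_2^n, x_3^n; R^+) \;=\; 0 \qquad\text{for every } i<3.
\]

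To upgrade this vanishing to the assertion that \emph{every} system of parameters of $R$ is a regular sequence on $R^+$, I use the hypothesis that $R$ is a homomorphic image of a Gorenstein local ring. Write $R = A/J$ with $A$ Gorenstein of dimension $n$, so $\grade_A(J) = n-3$; choose a maximal $A$-regular sequence $z_1,\ldots,z_{n-3}\in J$ and set $A' := A/(z_1,\ldots,z_{n-3})$, which is a three-dimensional Gorenstein local ring surjecting onto $R$. For an arbitrary SOP $y_1, y_2, y_3$ of $R$, a prime-avoidance argument produces a lift to a SOP $\tilde y_1, \tilde y_2, \tilde y_3$ of $A'$ reducing to $y_1, y_2, y_3$ in $R$; this lift is an $A'$-regular sequence by the Cohen--Macaulayness of $A'$, so $K^\bullet(\tilde y_1, \tilde y_2, \tilde y_3; A')$ is a finite free resolution of $A'/(\tilde y_1, \tilde y_2, \tilde y_3)$. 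Writing $R^+ = \varinjlim_\alpha R_\alpha$ as a filtered colimit of finite normal $R$-subalgebras (each finitely generated as an $A'$-module), local duality over the Gorenstein ring $A'$,
\[
H^i_{\fm_{A'}}(N)^{\vee} \;\cong\; \Ext^{3-i}_{A'}(N,\, A'),
\]
applies at every level $N = R_\alpha$. Combined with the isomorphism $H^i(y_1, y_2, y_3; R^+) \cong \varinjlim_\alpha \Ext^i_{A'}(A'/(\tilde y_1, \tilde y_2, \tilde y_3),\, R_\alpha)$ coming from the Koszul resolution above and the fact that $\Ext$ commutes with filtered colimits in its second argument, a careful colimit/duality manipulation using $H^i_\fm(R^+) = 0$ for $i<3$ then forces the Koszul cohomology $H^i(y_1, y_2, y_3; R^+)$ to vanish for $i<3$, which is exactly the assertion that $y_1, y_2, y_3$ is an $R^+$-regular sequence.

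The main obstacle is this last step: converting the vanishing of local cohomology of the non-finitely-generated module $R^+$ into Koszul-cohomology vanishing for an \emph{arbitrary} system of parameters. The Gorenstein hypothesis is essential because it makes the canonical module of $A'$ equal to $A'$ itself, so that local duality is particularly clean; the filtered-colimit structure of $R^+$ must be tracked with care, since $\Ext$ does not commute with colimits in its first variable, so the argument must trivialise bad Koszul cocycles at each finite level $R_\alpha$ by passing to a sufficiently large $R_\beta \supseteq R_\alpha$ and exploiting the colimit vanishing of $H^i_\fm$.
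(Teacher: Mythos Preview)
Your approach is essentially the same as the paper's: first extract $H^i_\fm(R^+)=0$ for $i\le 2$ from Bhatt's theorem (the paper does this in one line, you spell out the Koszul/colimit computation), and then invoke the Gorenstein-quotient hypothesis to pass from local-cohomology vanishing to the balanced big Cohen--Macaulay property. The paper does not give any more detail on the second step than you do; it simply writes ``the rest of the proof is just an adaptation of the proof of \cite[Corollary~2.3]{AbsoluteInt} to our situation, and the details are omitted.''

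One comment on your sketch of that second step: the two $\Ext$ groups you display, namely $\Ext^i_{A'}(A'/(\tilde y),R_\alpha)$ from the Koszul resolution and $\Ext^{3-i}_{A'}(R_\alpha,A')$ from local duality, have $R_\alpha$ sitting in opposite variables, and you never say how to relate them. That bridge is exactly the nontrivial content of the Huneke--Lyubeznik/Sharp argument you are alluding to, and it is not a formal ``colimit/duality manipulation''; the mere vanishing of $\varinjlim_\alpha H^i_\fm(R_\alpha)$ does not by itself force the transition maps to be eventually zero (the $H^i_\fm(R_\alpha)$ are Artinian, not Noetherian). You are right that the Gorenstein hypothesis is what makes this work, and your reduction to a $3$-dimensional Gorenstein $A'$ is set up correctly (note $J$ is prime since $R$ is a domain, so $\grade_A(J)=\Ht(J)=n-3$ is legitimate), but if you want to go beyond the paper's level of detail you would need to actually run the argument that, over a ring with dualizing complex, $H^i_\fm(M)=0$ for $i<\dim$ implies every system of parameters is $M$-regular.
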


\begin{proof}
By Theorem \ref{Bhattabsolute}, we have $H^i_\fm(R^+)=0$ for $0 \le i \le 2$.
The rest of the proof is just an adaptation of the proof of \cite[Corollary 2.3]{AbsoluteInt} to our situation, and the details are omitted. However, this proof does not work in mixed characteristic of dimension at least $4$ or equal characteristic zero in dimension at least $3$.
\end{proof}

In general, it is not easy to understand intrinsic properties on absolutely integrally closed domains. For instance, one can prove that every finitely generated ideal of $\mathbb{Z}^+$ is principal by using the finiteness of the class group of the ring of algebraic numbers. If $A$ is a Dedekind domain, then the localization of $A^+$ at every maximal ideal is a valuation ring of rank $1$. In dimension $2$, we have the following clean result.

\begin{proposition}
\label{GorRing}
Let $(R,\fm)$ be a complete regular local domain of dimension $2$ containing $\mathbb{Q}$. Then  the absolute integral closure of $R$ is a filtered colimit of module-finite extensions $R \to T$ such that $T$ is a normal Gorenstein local domain. 

In other words, every $2$-dimensional complete local domain containing $\mathbb{Q}$ has a module-finite extension that is normal and Gorenstein.
\end{proposition}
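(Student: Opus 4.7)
The plan is to prove the following cofinal statement: every module-finite normal local extension $S$ of $R$ sitting inside $R^+$ admits a further module-finite normal Gorenstein extension $T$ with $S\subset T\subset R^+$. This yields the first assertion by cofinality, and the ``in other words'' clause follows because any $2$-dimensional complete local domain $D$ containing $\mathbb{Q}$ is module-finite over some regular $R\cong k[[x,y]]$ (by Cohen's structure theorem plus Noether normalization) and so embeds in some Gorenstein $T$ from the colimit.

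Given $(S,\fn)$ a $2$-dimensional complete normal local domain with $\mathbb{Q}\subset S$, Serre's criterion yields $S$ Cohen--Macaulay, so $S$ carries a canonical module $\omega_S$, a rank-$1$ reflexive $S$-module representing a class $[\omega_S]\in\Cl(S)$; $S$ is Gorenstein precisely when $[\omega_S]=0$. The core tool is the canonical cyclic cover: if $[\omega_S]$ has finite order $n$ in $\Cl(S)$, fix an $S$-linear isomorphism $\omega_S^{(n)}\cong S$ and set
\[
T := \bigoplus_{i=0}^{n-1} \omega_S^{(i)},
\]
with the graded multiplication induced by that isomorphism. Then $T$ is module-finite and local over $S$, a domain after a branch choice. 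Since $\Char(S)=0$, the integer $n$ is invertible, so the extension $S\subset T$ is \'etale over the smooth locus of $\Spec S$ and $T$ is normal. Grothendieck duality for finite maps gives $\omega_T\cong\Hom_S(T,\omega_S)$, and the direct-sum expression then forces $\omega_T\cong T$, so $T$ is Gorenstein.

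The main obstacle is the reduction to the case where $[\omega_S]$ has finite order: for a general $2$-dimensional complete normal local $S$, the class $[\omega_S]\in\Cl(S)$ can be of infinite order (e.g., for a cone over a generic high-genus projective curve). The strategy is to pass to a finite ramified cover $S\to S'$ inside $R^+$ designed to kill the canonical class, exploiting that inside $R^+$ one may extract roots of any local generating section of a rank-$1$ reflexive sheaf, and that in characteristic $0$ the resulting tamely ramified cover remains normal. Once $[\omega_{S'}]$ is torsion, the canonical cyclic cover applied to $S'$ produces the desired Gorenstein $T\subset R^+$. The completeness, the dimension-$2$ hypothesis (forcing normal equals Cohen--Macaulay), and the characteristic-$0$ hypothesis (forcing normality of the tamely ramified cover) all enter essentially in this reduction step.
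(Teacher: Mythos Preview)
Your cyclic-cover step is correct and matches the paper's Claim~1: when $[\omega_S]$ has finite order $n$ in $\Cl(S)$, the canonical cover $T=\bigoplus_{i=0}^{n-1}\omega_S^{(i)}$ is a module-finite normal local domain with $\omega_T\cong T$, hence Gorenstein (normal $\Rightarrow$ Cohen--Macaulay in dimension~$2$).

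The gap is the reduction to finite order, which you yourself flag as the ``main obstacle'' but do not actually carry out. Your proposed mechanism---``extract roots of any local generating section of a rank-$1$ reflexive sheaf''---is not an argument. Adjoining $f^{1/n}$ for a local equation $f$ of a height-$1$ prime $\fp$ produces a cover ramified along $\operatorname{div}(f)$; chasing the canonical class through the Hurwitz-type formula expresses $[\omega_{S'}]$ in terms of the pullback $\pi^*[\omega_S]$ and the ramification divisor, and nothing in your sketch forces this to land in the torsion subgroup of $\Cl(S')$. You have asserted the existence of a cover ``designed to kill the canonical class'' without designing one, and the sentence ``once $[\omega_{S'}]$ is torsion'' simply restates the problem.

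The paper closes this gap by a different device. It first replaces $S$ by its integral closure in the Galois closure of $\Frac(S)/\Frac(R)$, so that $R\subset S$ becomes a \emph{normal} (Galois) extension of the regular base, and then invokes a result of Griffith \cite[Corollary~2.8]{LocalRamification}: a complete normal local domain containing~$\mathbb{Q}$ which is Galois over a regular local ring admits a finite normal quasi-Gorenstein extension. The Galois structure over a regular base is precisely the input that gives control over the canonical class; this is the substantive ingredient your outline is missing. The paper's own contribution (its Claim~1) is the observation that Griffith's standing hypothesis on roots of unity in the residue field can be dropped, so that the algebraically closed residue field assumption in \cite[Corollary~2.8]{LocalRamification} is unnecessary.
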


\begin{proof}
Start with a module-finite extension of normal local domains $R \to S$. It suffices to prove that there is a further extension $R \to S \to T$ such that $T$ is normal and Gorenstein. To this aim, we may assume that $R \to S$ is a normal extension, that is a finite extension whose induced fraction fields extension is Galois. Then the statement follows from the following two claims:

The first claim is well-known to experts, see for example \cite[Section 4]{SinghCyclic} or \cite[Theorem 2.4 and Corollary 2.5]{LocalRamification} where in the second reference a further assumption on the existence of $n$-th roots of the unity is imposed.

\textbf{Claim 1:}
\textit{Let $R$ be a complete  normal local ring containing $\mathbb{Q}$ and let $\fa$ be a pure height $1$ ideal of $R$ such that $\fa$ has finite order $n$ as an element of the divisor class group of $R$. Suppose that $\fa^{(n)}=aR$ for some $a\in R$. Then the cyclic cover
$$
R':=(R\oplus \fa t\oplus \fa^{(2)}t^2\oplus\cdots\oplus\fa^{(k)}t^k\oplus\cdots)/(at^n-1)
$$
of $R$ is a complete local normal domain such that the natural extension $R\overset{r\mapsto r}{\longrightarrow} R'$ is \'etale in codimension $1$. Moreover, $(\fa\otimes_RR')^{**}\cong R'$  where $^{**}$ denotes the double $R'$-dual.}

\textit{Proof of Claim 1:}
The normality of $R'$ follows if one shows that $R\rightarrow R'$ is \'etale in codimension $1$. This \'etale property  is explained in  \cite[Section 4]{SinghCyclic}. Finally the isomorphism $(\fa\otimes_RR')^{**}\cong R'$ also follows from the same arguments as in the proof of \cite[Corollary 2.5]{LocalRamification} (here, note that the fraction field of $R'$ is $\Frac(R)$-isomorphic to the cyclic extension $\Frac(R)[X]/(X^n-a)$ of $\Frac(R)$ and since $X \pmod{X^n-a}$ is obviously integral over $R$ and $R'$ is normal so $X \pmod{X^n-a}=\sqrt[n]{a}\in R'$).

The second claim is also about obviating the algebraically closedness condition in \cite[Corollary   2.8]{LocalRamification} on the residue field.

\textbf{Claim 2:} \textit{(cf. \cite[Corollary  2.8]{LocalRamification})
Let $R$ be a complete local domain containing $\mathbb{Q}$. Then there exists a finite extension $R \hookrightarrow R'$ such that $R'$ is quasi-Gorenstein and normal.}

\textit{Proof of  Claim 2:}
Replacing $R$ with its integral closure in the Galois closure of the field extension $[\Frac(R):\Frac(B)]$, where $B$ is a Noether normalization of $R$, we can assume that $R$ is a normal extension of the complete regular local ring $B$. Then our statement follows by the same argument as in the proof of Griffith's result \cite[Corollary 2.8]{LocalRamification}, after noticing that in view of Claim  1 one can relax the condition on the existence of the roots of the unity in \cite[Theorem 2.4]{LocalRamification}, thence the condition on the algebraically closedness  of the residue field in the statement of \cite[Corollary 2.8]{LocalRamification} can also be relaxed (we notice that to deduce the statement one should apply Claim 1  to the canonical ideal of $R$ which is of finite order as an element of the divisor class group of $R$,  as $R$ is a normal extension $B$). 
\end{proof}

\begin{remark}
Recall that a local ring $(R,\fm)$ is \textit{quasi-Gorenstein} if $R$ admits a canonical module $\omega_R$ such that $\omega_R \cong R$ as $R$-modules. A Cohen-Macaulay local ring is quasi-Gorenstein if and only if it is Gorenstein. So,  quasi-Gorensteinness and Gorensteinness are equivalent in dimension $2$. A \textit{quasi-Gorenstein normal cover} of a local domain $R$ is a module-finite quasi-Gorenstein normal local extension of $R$.
\begin{itemize}
\item
In \cite[Theorem 2.5]{TavanfarASufficient}, a (weaker) version of Proposition \ref{GorRing} is proved in any dimension and any characteristic, where Gorensteinness has to be replaced with quasi-Gorensteinness, but Serre's $(R_1)$ is replaced by being complete intersection in codimension $\le 1$ (cf.  the third bullet comment below). Then \cite[Theorem 2.5]{TavanfarASufficient}, as discussed in \cite{TavanfarASufficient}, is used to show that how this property can be useful for proving the possible almost vanishing of  $H^{d-1}_\fm(R^+)$ in equal characteristic zero\footnote{Or in mixed characteristic, but in   this case it is now known that the local cohomology vanishes \cite{AbsoluteIntegral}.} and in arbitrary dimension $d$ (if this open question is going to have an affirmative answer), provided that every part of system of parameters of length $3$ is an almost regular sequence in a certain strong sense (see \cite[Statement $\mathcal{A}$, page 2579]{TavanfarASufficient} and the point is that while the dimension $d$ is arbitrary, the condition is imposed on a part of system of parameters of length $3$).

\item
Recently, it was claimed and proved in \cite[Lemma 3.5]{Quasi-Gorenstein} that any essentially of finite type normal domain over a field of characteristic $\ne 2$ admits a quasi-Gorenstein normal cover.

\item
Fortunately, there is an under preparation paper/project in which the existence of quasi-Gorenstein normal covers is planned to be established in an arbitrary characteristic for complete local normal domains.
\end{itemize}
\end{remark}

\section{Constructing complete local domains in mixed characteristic}
\subsection{Blow-up algebras over $p$-adic rings}

We prove Theorem \ref{SegreIso2}, following a construction given by Heitmann (see also \cite[Example (4.4.13)]{GotoWatanabeOnGraded} for another construction using Segre product).

\begin{example}
\label{ExamplePrimeNonExistenceExample}
It was pointed out to the authors (with an outlined proof) that what follows has long been the prime example for the folklore fact that there is an example of a mixed characteristic normal  local domain without any small Cohen-Macaulay algebra:

Let $A$ be either a field of characteristic not equal to three or the $p$-adic integers for some prime $p\neq 3$. Let $B=A[x,y,z]/(x^3+y^3+z^3)$ and let $R$ be the blow-up of $B$ at its maximal homogeneous ideal $(x,y,z)$. Then $R$ is a normal domain which does not have a small Cohen-Macaulay algebra. But, the irrelevant ideal $(xt,yt,zt)$ of $R$ is a maximal Cohen-Macaulay $R$-module (cf. Remark \ref{RemarkFermatEllipticCurve}\ref{itm:HochsterExample} as well as Corollary \ref{CorollaryBlowUpOfCMadmitMCM}).
	
Our Theorem \ref{SegreIso2} is the higher dimensional extension of this example.  While Theorem 1.4 can be proved by an algebraic proof as was outlined to the authors, we provide a proof for a more general statement (Proposition \ref{PropositionCalabiYau}) with an algebraic geometric flavor which is a potential source for  many other different but similar examples. Moreover, we derive Corollary \ref{CorollaryBlowUpOfCMadmitMCM} as a new result which is perhaps intrinsically interesting (this might be known to some experts and of course Example \ref{ExamplePrimeNonExistenceExample} led us to this corollary). 
\end{example}

\begin{lemma}\label{LemmaBlowUpAndSegreProduct}
Let $R$ be a standard normal $\mathbb{N}_0$-graded domain that is finitely generated over a perfect field $R_{[0]}=k$.
Then $R[R_+t]$ is normal, and $R\# k[A,B]$ and $ R[R_+t]$ are isomorphic to each other by a non-graded isomorphism mapping the homogeneous maximal ideal of the source onto the homogeneous maximal ideal of the target (here $A,B$ have degree $1$ and $R[R_+t]$ is considered with its canonical grading or bigrading).
\end{lemma}

\begin{proof}
Since $k$ is  perfect,   $R\#k[A,B]$ is normal and in particular it is  a domain (\cite[Tag 0380 and Tag 06DF]{StacksProject} and \cite[Remark (4.0.3)(iv)]{GotoWatanabeOnGraded}).

Let $x_1,\ldots,x_n$ be a basis of $R_{[1]}$ over $k$. The Segre product $R\# k[A,B]$ coincides with $k[Z_{1},\ldots,Z_{2n}]/\mathfrak{a}$, where $\mathfrak{a}$ coincides with the kernel of the (graded) ring homomorphism
$$
\varphi:k[Z_{1},\ldots,Z_{2n}]\rightarrow R\#k[A,B],\ \ \begin{cases} Z_{i}\mapsto x_{i}\# A, & 1\le i\le n \\
Z_{i}\mapsto x_{i-n}\# B, & n+1\le i\le2n \end{cases}.
$$
Moreover, the blow-up algebra $R[R_+t]$  coincides with $k[Z_{1},\ldots,Z_{2n}]/\mathfrak{b}$, where $\mathfrak{b}$ coincides with the kernel of the (non-graded) ring homomorphism
$$
\psi:k[Z_{1},\ldots,Z_{2n}] \rightarrow R[x_{1}t,\ldots,x	_{n}t]=R[R_+t],\ \ \begin{cases} Z_{i}\mapsto x_{i}, & 1\le i\le n \\
Z_{i}\mapsto x_{i-n}t, & n+1\le i\le2n \end{cases}.
$$
On the other hand, there is an obvious (non-graded) ring homomorphism
\begin{center}
$\gamma:
 R\#(k[A,B])\overset{\text{inclusion}}{\hookrightarrow}R\otimes_kk[A,B]\rightarrow R[t]$
\end{center}
such that the last ring map extends the natural embedding $R\hookrightarrow R[t]$ by the assignment $A\mapsto1$ and $B\mapsto t$. Since we clearly have $\gamma\circ\varphi=\eta \circ \psi$, where $\eta$ denotes the natural inclusion
 $R[R_+t]\hookrightarrow R[t]$ we get $\mathfrak{a}\subseteq\mathfrak{b}$. Thus, there exists a surjective ring homomorphism
$$
\pi:R\#k[A,B]\twoheadrightarrow R[R_+t].
$$
After comparing the dimensions on both sides, it follows that $\pi$ is an isomorphism. So the proof
  is complete.
However, the normality of   $R[R_+t]$ (since $R$ is standard) can also  be deduced from \cite[Corollary (1.3)]{BrumattiSimisVasconcelosNormal} (see also \cite[(2.1), page 693]{TomariWatanabe} for the more general case of the Rees algebra of a (not necessarily adic) Noetherian filtration of ideals) as $\gr_{R_+}(R)=R$ is a domain in the standard case. This normality also, alternatively, follows from \cite[Theorem 4.1]{HublSwansonDiscrete}.
\end{proof}

\begin{remark}
Let $X$ be a normal projective variety over an algebraically closed field $k$ with a very  ample line bundle $L$ on it. One might be curious  if there exists any geometry behind the isomorphism mentioned in Lemma \ref{LemmaBlowUpAndSegreProduct},
 between the    blow-up ring $R(X,L)[R(X,L)_+t]$ and the affine cone of $\big(\mathbf{P}^1_X,L\boxtimes \mathcal{O}_{\mathbf{P}^1_k}(1)\big)$. As this isomorphism is not graded, the only possible existing geometrical interpretation seems to us to be stated is  as follows:

If one considers the blow-up $R(X,L)[R(X,L)_+t]$  with its natural bigraded algebra structure and also if one endows $R(X,L)\#k[A,B]$ with a bigraded structure in such a way that $R_{[d]}\# A$ has degree $(d,0)$ and $R_{[d]}\# B$ has degree $(d,1)$, then the isomorphism in Lemma \ref{LemmaBlowUpAndSegreProduct} is  a bigraded isomorphism of $\mathbb{N}_0^2$-bigraded algebras over the field $k$. Then these isomorphic bigraded algebras combined with \cite[Theorem 8.6]{AraiEchizenyaKuranoDemazure} may enable us to interpret the rings $R(X,L)[R(X,L)_+t]$ and $R(X,L)\# k[A,B]$ as the Cox rings of some (unique) ample $\mathbb{Q}$-divisors on a (unique)  projective variety.
\end{remark}

We continue by presenting  an immediate corollary to Lemma \ref{LemmaBlowUpAndSegreProduct} which is perhaps intrinsically interesting, but it might be known to some experts because we derived it from (the proof of)  Example \ref{ExamplePrimeNonExistenceExample}.

\begin{corollary}
\label{CorollaryBlowUpOfCMadmitMCM}
Let $R$ be a  Cohen-Macaulay normal standard $\mathbb{N}_0$-graded ring of dimension $\ge 2$ over a perfect field $k=R_{[0]}$.   Then   $R[R_+t]$  admits a  finitely generated module $M$ which is maximal Cohen-Macaulay with respect to the unique homogeneous maximal ideal of $R[R_+t]$.
\end{corollary}

\begin{proof}
By Lemma \ref{LemmaBlowUpAndSegreProduct},
it suffices to observe that our statement holds for the  Segre product $R\# k[A,B]$ (with $\deg(A)=\deg(B)=1$). But  in view of \cite[Theorem (4.1.5)]{GotoWatanabeOnGraded}, the $R\#k[A,B]$-module $R(s)\#k[A,B]$ is maximal Cohen-Macaulay for $s=\max\{\alpha+1,0\}$, where $\alpha$ is the $a$-invariant of $R$.
\end{proof}

In some texts in the literature, the normal projective varieties considered in the next proposition are called \textit{Calabi-Yau varieties} (without requiring 
the  Kodaira vanishing (or arithmetically Cohen-Macaulay) condition imposed in the statement of the proposition). The $K3$ surfaces and the Abelian varieties are examples of such projective varieties. We stress that the ring $R(X,L)$ in the next proposition may not be Cohen-Macaulay (for example if $X$ is an Abelian variety of dimension $\ge 2$), in contrast to our presumption on $R$ in the statement of Corollary \ref{CorollaryBlowUpOfCMadmitMCM}. The \textit{Kodaira vanishing theorem} mentioned in the statement of the next proposition, is the sheaf cohomology vanishings appearing in  \cite[Corollaire 2.8]{DeligneIllusieRelevements} (prime characteristic) and \cite[Corollaire 2.11]{DeligneIllusieRelevements} (equal characteristic zero). One can notice that in the prime characteristic case these sheaf cohomology vanishings are bounded by $i+j<\inf\{\Char(k),d\}$, so might not hold for any  $i+j<d$. If the prime characteristic smooth projective variety  $X$ satisfies these vanishings for any ample line bundle $L$ and any $i,j$ with $i,j<d(=\dim(X))$, then we say  that $X$ satisfies the \textit{strong Kodaira vanishing theorem}.

\begin{proposition}
\label{PropositionCalabiYau}
Let $X$ be a smooth projective variety of dimension $d\ge 1$ over an algebraically closed field $k$ whose canonical divisor is trivial (i.e. $K_X\cong \mathcal{O}_X$), and let  $L$ be a very ample  divisor on $X$. Suppose that at least one of the following conditions holds:
\begin{enumerate}[(i)]
	\item \label{itm:EqaulCharacteristicZeroCase} $k$ has characteristic zero (thus $X$ satisfies the Kodaira vanishing theorem, see \cite[Corollaire 2.11]{DeligneIllusieRelevements}).
	\item \label{itm:StrongKodairaVanishingCase} $k$ has prime characteristic and $X$ satisfies the strong Kodaira vanishing theorem   which is the case  where, for example, $X$ lifts to $W_2(k)$ and $\Char(k)\ge d$ (see \cite[Corollaire 2.8]{DeligneIllusieRelevements}).
	\item \label{itm:ArithmeticallyCohenMacaulayCase} $k$ has prime characteristic and $(X,L)$ is arithmetically Cohen-Macaulay, i.e. $R(X,L)$ is Cohen-Macaulay.
\end{enumerate}  Let
$$
S:=R(X,L)[R(X,L)_+t]
$$
be the blow-up of the homogeneous maximal ideal $R(X,L)_+$, and let 
$$
I:=(R(X,L)_+t)
$$
be the irrelevant ideal of $S$. Then $S$ is not Cohen-Macaulay and $I$ is a maximal Cohen-Macaulay $S$-module.

In the geometric vein, after identifying $S$ with $R(X,L)\# k[A,B]$ (Lemma \ref{LemmaBlowUpAndSegreProduct}),
the ideal $I$ is the defining ideal of the cone of $L$ over $X\times \{0\}(=X\times \{(B)\})$ as a quotient of the cone of $$\big(X\times \mathbf{P}^1_k,L\boxtimes \mathcal{O}_{\mathbf{P}^1_k}(1)\big).
$$
\end{proposition}

\begin{proof}
We bring to the reader's attention that  the graded ring $R(X,L)$ is standard, because $L$ is very ample. We begin our proof with the following claim.
	
\textbf{Claim:} The validity of any  of \ref{itm:EqaulCharacteristicZeroCase}, \ref{itm:StrongKodairaVanishingCase} or \ref{itm:ArithmeticallyCohenMacaulayCase}, according to our hypothesis, implies that    
\begin{equation}
\label{EquationCohomologyNonVanishingCases}
H^i(X,L^n)\neq 0\ \  \text{only  if}\ \   (i,n)\in\{(j,0)\}_{1\le j\le d-1}\cup(0\times \mathbb{N}_0)\cup \big(d\times(\mathbb{Z}\backslash \mathbb{N})\big).
\end{equation}

 \textit{Proof of the claim:} If $X$ satisfies the  Kodaira vanishing theorem (condition \ref{itm:EqaulCharacteristicZeroCase}), or its strong version in  prime characteristic (condition \ref{itm:StrongKodairaVanishingCase}), then (\ref{EquationCohomologyNonVanishingCases}) follows from Serre duality, the triviality of $K_X$ and the (strong) Kodaira vanishing theorem. So it remains to discuss the case where $R(X,L)$ is a Cohen-Macaulay ring. In this case, the  cohomology sheaves $H^{i}(X,L^j)=H^{i+1}_{R(X,L)_+}(R(X,L))_{[j]}$ vanish for $0<i<d$ and all $j$ (\cite[Proposition (2.2)]{WatanabeSomeRemarks}). Finally, $H^0(X,L^{-j})=0$ for each $j\in \mathbb{N}$ by \cite[III, Exercise 7.1]{HartshorneAlgebraic}, and then   again  Serre duality and the triviality of the canonical sheaf yields $H^d(X,L^j)=0$ for $j\in \mathbb{N}$. Thus (\ref{EquationCohomologyNonVanishingCases}) holds in any case.

\vspace{2mm}

Let $Y:=X\times \mathbf{P}^1_k$ and $\mathcal{L}:=\pi_1^*(L)\otimes\pi_2^*(\mathcal{O}_{\mathbf{P}^1_k}(1))$, where $\pi_1$ and $\pi_2$ are projections to $X$ and $\mathbf{P}^1_k$, respectively. Let $\mathfrak{a}$ be the principal ideal generated by $\alpha_0A+\alpha_1B$ in $k[A,B]$ for some point  $(\alpha_0:\alpha_1)$ in the projective line, and let $\mathcal{I}$ be the ideal sheaf 
\begin{equation}
\label{EqutionSheafIdealI}
\mathcal{I}:=\widetilde{\mathfrak{a}} \mathcal{O}_{Y}\cong\pi_2^*\big(\widetilde{\mathfrak{a}}\big)\cong\pi_2^*\big(\mathcal{O}_{\mathbf{P}^1_k}(-1)\big)
\end{equation}  
of $Y$. It is readily seen that $$\Gamma_*\big(Y,\mathcal{I},\mathcal{L}\big)=\bigoplus_{n\in\mathbb{Z}}H^0(Y,\mathcal{I}\otimes\mathcal{L}^n\big)$$ is maximal Cohen-Macaulay as a module over $R\big(Y,\mathcal{L}\big)=\bigoplus_{n\in \mathbb{N}_0}H^0(Y,\mathcal{L}^n)$. Namely by \cite[Tag 0BED]{StacksProject},
\begin{align}
\label{EquationVanishingOfCohomologiesOfTheConeOfTheIDealShaef}
\forall\ (i,n)\in ([1,d]\cap \mathbb{N})\times\mathbb{Z},\ \ H^i\big(Y,\mathcal{I}\otimes \mathcal{L}^n\big)
&=H^i\big(Y,\mathcal{I}\otimes \pi^*_1(L^n)\otimes \pi_2^*(\mathcal{O}_{\mathbf{P}^1_k}(n))\big)
&\nonumber \\&\cong 
H^i\big(Y,\pi^*_1(L^n)\otimes \pi_2^*(\mathcal{O}_{\mathbf{P}^1_k}(n-1))\big)
&\nonumber \\& =
\big(H^i(X,L^n)\otimes H^0(\mathbf{P}^1_k,\mathcal{O}_{\mathbf{P}^1_k}(n-1))\big) 
&\nonumber \\&
\oplus \big( H^{i-1}(X,L^n)\otimes H^1(\mathbf{P}^1_k,\mathcal{O}_{\mathbf{P}^1_k}(n-1))\big).
\end{align}
Since $H^1\big(\mathbf{P}^1_k,\mathcal{O}_{\mathbf{P}^1_k}(m)\big)=0$ for all $m\ge -1$ and  $H^0\big(\mathbf{P}^1_k,\mathcal{O}_{\mathbf{P}^1_k}(m)\big)=0$ for all $m\le -1$, we see that the first summand vanishes when $n\le 0$ and the second summand vanishes when $n\ge 0$. In view of  display (\ref{EquationCohomologyNonVanishingCases}), both summands always vanish and so $H^i(Y,\mathcal{I}\otimes \mathcal{L}^n)=0$ for any  $(i,n)\in ([1,d]\cap \mathbb{N})\times\mathbb{Z}$. 
 From this and \cite[Proposition 2.1.5]{GrothendieckEGAIII} (see \cite[Remark 2.6(iii)]{ShimomotoTavanfarRemarks} for an English text where this statement is mentioned without a proof), we conclude that $\Gamma_*\big(Y,\mathcal{I},\mathcal{L}\big)$ is a maximal Cohen-Macaulay $R(Y,\mathcal{L})$-module.

 Thus, in view of Lemma \ref{LemmaBlowUpAndSegreProduct}
 and its proof, 
$$
R(Y,\mathcal{L})=R(X,L)\#k[A,B]\cong R(X,L)[R(X,L)_+t]=S
$$
under an isomorphism which maps the ideal  $J:=R(X,L)_+\# (Bk[A,B])$ of $R(X,L)\# k[A,B]$ to the ideal $I$ as defined in the statement. In order to complete the proof, it suffices for $\fa=Bk[A,B]$ (here we  specialize  $\alpha_0:=0$ in the above definition of $\mathfrak{a}$) to verify that  
\begin{align}
\label{EquationConeOfI}
\Gamma_*(Y,\mathcal{I},\mathcal{L})
&=
\bigoplus_{n\in\mathbb{Z}}\Big(H^0(X,L^n)\otimes H^0\big(\mathbf{P}^1_k,\widetilde{\mathfrak{a}}\otimes \mathcal{O}_{\mathbf{P}^1_k}(n)\big)\Big)
&\nonumber\\& \cong 
\bigoplus_{n\in\mathbb{N}}\Big(H^0(X,L^n)\otimes H^0\big(\mathbf{P}^1_k,\widetilde{\mathfrak{a}(n)}\big)\Big)
&\nonumber\\&
=J && \text{(\cite[(5.1.6)]{GotoWatanabeOnGraded})}.
\end{align}
We notice that, since $K_X\cong \mathcal{O}_X$, it follows from Serre duality that $H^d(X,\mathcal{O}_X)\neq 0$ from which we can deduce that $R(X,L)\#k[A,B]$  is not Cohen-Macaulay (\cite[Theorem (4.1.5)]{GotoWatanabeOnGraded}) and so   $S$ is not Cohen-Macaulay.

We also notice that, from the above display as well as display (\ref{EqutionSheafIdealI}) and \cite[Tag 01JU(1)]{StacksProject},  the ideal $J$ has the geometric description mentioned in the last sentence of the statement. More precisely, in view of \cite[Tag 01JU(1)]{StacksProject}, $X\times_k \{(B)\}=(X\times_k \mathbf{P}^1_k)\times_{\mathbf{P}^1_k} \{(B)\}$ is the closed subscheme of $Y=X\times_k\mathbf{P}^1_k$ with respect to the ideal sheaf $\mathcal{I}=\widetilde{(B)}\mathcal{O}_Y$ and thus the cone of $L$ over $X\times_k \{(B)\}$ is the same as the cone of $\mathcal{L}/\widetilde{(B)}\mathcal{L}=\mathcal{L}/\widetilde{\mathfrak{a}}\mathcal{L}$ over $Y$, i.e. $\bigoplus_{n\in\mathbb{N}_0}H^0(Y,\mathcal{L}^n/\widetilde{\mathfrak{a}}\mathcal{L}^n)$ (see \cite[Tag 04CJ(1)]{StacksProject} and \cite[III, Lemma 2.10]{HartshorneAlgebraic}). Then the exact sequences $$0\rightarrow \widetilde{\mathfrak{a}}\mathcal{L}^n\rightarrow \mathcal{L}^n\rightarrow \mathcal{L}^n/\widetilde{\mathfrak{a}}\mathcal{L}^n\rightarrow 0$$ of sheaves of $\mathcal{O}_Y$-modules and the fact that $$\forall\ n\in\mathbb{Z},\ \ H^1(Y,\widetilde{\mathfrak{a}}\mathcal{L}^n)\cong H^1(Y,\pi_2^*(\mathfrak{\widetilde{a})}\otimes \mathcal{L}^n)\overset{\text{display (\ref{EquationVanishingOfCohomologiesOfTheConeOfTheIDealShaef})}}{=}0$$ prove our claim that $J=\Gamma_*(Y,\mathcal{I},\mathcal{L})\cong \bigoplus_{n\in\mathbb{N}_0} H^0(Y,\widetilde{\mathfrak{a}}\mathcal{L}^n)$ is the defining ideal of the cone as stated in the statement.
\end{proof}

Now we give a proof of Theorem \ref{SegreIso2}.

\begin{proof}[Proof of Theorem \ref{SegreIso2}]
Let us show that $I$ is a small Cohen-Macaulay $R$-module. The ideal $I$ is a small Cohen-Macaulay module if and only if $I/pI$ is Cohen-Macaulay.
 Consider the exact sequence
$$
0\rightarrow (pR\cap I)/pI\rightarrow I/pI\rightarrow I(R/pR)\rightarrow 0.
$$
 Because 
$
(pR\cap I)/pI\cong \Tor^R_1(R/pR,R/I)\cong 0:_{V[X_1,\ldots,X_n]/(X_1^n+\cdots+X_n^n)}p=0,
$
$I/pI\cong I(R/pR).$
Since $I(R/pR)$ is equal to the ideal $(X_{1}t,\ldots,X_{n}t)$ in $$S:=\big(k[X_{1},\ldots,X_{n}]/(X_{1}^{n}+\cdots+X_{n}^{n})\big)[X_{1}t,\ldots,X_{n}t],$$ we are reduced to considering the case of blow-up algebras over a field $k$.

By Lemma \ref{LemmaBlowUpAndSegreProduct},
 $S$ is isomorphic to the Segre product
$$
\big(k[X_{1},\ldots,X_{n}]/(X_{1}^{n}+\cdots+X_{n}^{n})\big)\# k[A,B],
$$
where $k[X_{1},\ldots,X_{n}]/(X_{1}^{n}+\cdots+X_{n}^{n})$ is the cone of $\big(X,\mathcal{O}_{X}(1)\big)$ with $$X:=\Proj\big(k[X_{1},\ldots,X_{n}]/(X_{1}^{n}+\cdots+X_{n}^{n})\big)$$ and $\mathcal{O}_{X}(1)$  is the associated coherent sheaf to $\big(k[X_{1},\ldots,X_{n}]/(X_{1}^{n}+\cdots+X_{n}^{n})\big)(1)$ on $X$. Since $k[X_{1},\ldots,X_{n}]/(X_{1}^{n}+\cdots+X_{n}^{n})$ (say $R'$ from now on) is Gorenstein (a hypersurface) of $a$-invariant zero, the canonical divisor of $X$ is trivial, i.e. $$K_X(=\widetilde{\omega_{R'}}\overset{\text{deg.  0 graded iso.}}{\cong} \widetilde{R'})\cong \mathcal{O}_X.$$  
 Furthermore, $X$ is smooth because $R'$ is a standard graded ring that is an isolated singularity in view  of the Jacobian criterion (\cite[Theorem 4.4.9]{HunekeSwansonIntegral}) and our assumption on the characteristic. Consequently,  we are in the situation of Proposition \ref{PropositionCalabiYau} where the condition \ref{itm:ArithmeticallyCohenMacaulayCase} of the statement of Proposition \ref{PropositionCalabiYau} obviously holds in our current situation (as $R(X,O_X(1))=k[X_{1},\ldots,X_{n}]/(X_{1}^{n}+\cdots+X_{n}^{n})$), 
 from which we conclude that the ideal $(X_1t,\ldots,X_nt)$ is a maximal Cohen-Macaulay module over $S$ as desired (it is easily seen that, in the case where $\Char(k)\ge n-2$, we are also in the situation of Proposition 3.5\ref{itm:StrongKodairaVanishingCase} and thus we can get the same conclusion from the strong Kodaira vanishing theorem). 

It remains to discuss the non-existence of small Cohen-Macaulay algebras as mentioned in the statement. The localization of $$\big(\Frac(V)[X_1,\ldots,X_n]/(X_1^n+\cdots+X_n^n)\big)[X_1t,\dots,X_nt]$$ at the maximal ideal $(X_1,\ldots,X_n,X_1t,\ldots,X_nt)$ coincides with a localization of $(R_{\fM})_{p}$, which is not Cohen-Macaulay. Thus $(\widehat{R_{\fM}})_p$ is also not Cohen-Macaulay in view of the faithful flatness of $(R_{\fM})_p\rightarrow (\widehat{R_{\fM}})_p$. Since $R$ is normal and excellent, $(\widehat{R_{\fM}})_p$ is normal. As it has equal characteristic zero, it can not have a small Cohen-Macaulay algebra. However, any small Cohen-Macaulay algebra for $\widehat{R_{\fM}}$ would localize to a small Cohen-Macaulay algebra for $(\widehat{R_{\fM}})_p$ and the proof is complete.
\end{proof}
	

\begin{remark}\label{RemarkFermatEllipticCurve}
		 In the situation of Theorem \ref{SegreIso2},  suppose furthermore that   $n=3$.  So $R/pR$  is the blow-up  at the homogeneous maximal ideal of the cone over the Fermat elliptic curve $$\Proj\big(k[X_1,X_2,X_3]/(X_1^3+X_2^3+X_3^3)\big).$$ In this more special case, we provide the following two complimentary comments:
		 \begin{enumerate}[(i)]
		 	\item  It follows from \cite[Example 5.3]{SannaiSinghGalois} that $R/pR$ does not admit a graded small Cohen-Macaulay algebra. In particular, in this case, we can conclude that  $R$ does not admit a graded small Cohen-Macaulay algebra by reducing the statement to the prime characteristic instead of the reduction to the equal characteristic zero  (see also Remark \ref{AlbaneseVar}\ref{itm:AlbaneseVarNonExistenceOfSMC} for some other related arguments).
		 	
		 	\item\label{itm:HochsterExample} In \cite[Example 5.9]{HochsterCohenMacaulay}, the existence of maximal Cohen-Macaulay ideals has been established over a class of equal characteristic zero $3$-dimensional rings which includes our discussed blow-up  $\big(k[X_1,X_2,X_3]/(X_1^3+X_2^3+X_3^3)\big)[X_1t,X_2t,X_3t]$  (in view of Lemma \ref{LemmaBlowUpAndSegreProduct}
		 	 as well).
		 \end{enumerate}
\end{remark}

\subsection{An example arising from $p$-adic deformations of $K3$ surfaces}

We begin with a key result; see \cite[Theorem 1.3]{SmallCM}.

\begin{theorem}[Bhatt]
\label{non-existence}
Let $(X,L)$ be a normal  polarized variety over a perfect field $k$ of characteristic $p>0$ with section ring $R(X,L):=\bigoplus_{n \ge 0} H^0(X,L^n)$. Let $R$ denote the completion of $R(X,L)$ along the maximal ideal $R(X,L)_{+}$. Assume that 
$$
H^i_{\rm{rig}}(X/W(k)[\frac{1}{p}])_{<1} \ne 0~\mbox{for some}~0<i<\dim X.
$$
Then $R$ does not admit a small Cohen-Macaulay algebra.
\end{theorem}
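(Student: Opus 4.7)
The plan is to argue by contradiction. Suppose a small Cohen-Macaulay $R$-algebra $S$ exists, so $\dim S=\dim R$ and $H^j_\fm(S)=0$ for every $j<\dim R$. Choose a minimal prime $\fp\subset S$ with $\fp\cap R=(0)$; then $R\hookrightarrow S/\fp$ is a module-finite inclusion of complete local domains, and $S/\fp$ embeds into $R^+$. Functoriality of local cohomology applied to the composition $R\to S\twoheadrightarrow S/\fp\hookrightarrow R^+$ shows that the natural map $H^i_\fm(R)\to H^i_\fm(R^+)$ factors through $H^i_\fm(S)=0$ for every $i<\dim R$ (and in particular for the fixed $i$ with $0<i<\dim X$, since $\dim X=\dim R-1$). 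It therefore suffices to exhibit a class in $H^i_\fm(R)$ whose image in $H^i_\fm(R^+)$ is nonzero.

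To produce such a class, I would use Serre's standard identification
$$
[H^j_\fm(R)]_n \cong H^{j-1}(X,L^n)\quad (j\ge 2,\ n\in\mathbb{Z}),
$$
valid after possibly replacing $L$ by a sufficiently positive power; the degree $n=0$ summand gives a natural inclusion $H^{i-1}(X,\mathcal{O}_X)\hookrightarrow H^i_\fm(R)$. I would then convert the rigid cohomology hypothesis into a geometric class via the Illusie--Raynaud slope spectral sequence: the slope $<1$ part of $H^i_{\mathrm{rig}}(X/W(k)[\tfrac{1}{p}])$ is identified with the slope $<1$ part of the Witt vector cohomology $H^{i-1}(X,W\mathcal{O}_X)[\tfrac{1}{p}]$. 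Reducing modulo $p$ and rescaling by a $p$-adic unit then yields a nonzero $\alpha\in H^{i-1}(X,\mathcal{O}_X)$ that is Frobenius-divisible in the Witt tower, meaning some $p$-power multiple of $\alpha$ admits Frobenius preimages of every order.

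The central and most delicate step, and the main obstacle, is to show that $\alpha$ survives to a nonzero element of $H^i_\fm(R^+)$. The mechanism is Artin--Schreier--Witt theory: Frobenius-invertibility of $\alpha$ allows one to build a tower of finite Artin--Schreier covers of $R$ sitting inside $R^+$, whose colimit realizes the compatible system of Frobenius preimages of $\alpha$; because Frobenius acts topologically invertibly on the slope $<1$ piece, the class does not ``wash out'' at infinity. Equivalently, as carried out in Bhatt's paper via derived perfectoidization, the slope $<1$ part of crystalline cohomology is precisely what survives when one replaces $R$ by its $p$-adic perfectoidization (a close cousin of $R^+$), while the slope $\ge 1$ part is killed. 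Once the non-vanishing of $\alpha$ in $H^i_\fm(R^+)$ is established, the factorization through $H^i_\fm(S)=0$ produced in the first paragraph yields the desired contradiction.
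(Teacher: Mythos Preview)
The paper does not give its own proof of this statement; it is quoted verbatim from Bhatt \cite[Theorem~1.3]{SmallCM} and used as a black box. So there is nothing in the present paper to compare your attempt against, only Bhatt's original argument.

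Your outline is broadly faithful to Bhatt's strategy, but it carries a consistent off--by--one error. The identification coming from the slope spectral sequence (equivalently, the Berthelot--Bloch--Esnault isomorphism the paper records as $(\ref{WittVectorCohSeq})$) reads
\[
H^{i}_{\mathrm{rig}}\!\bigl(X/W(k)[\tfrac{1}{p}]\bigr)_{<1}\ \cong\ H^{i}(X,W\mathcal{O}_X)\bigl[\tfrac{1}{p}\bigr],
\]
with the \emph{same} index $i$ on both sides, not $i-1$. Hence the coherent class you want lives in $H^{i}(X,\mathcal{O}_X)\hookrightarrow [H^{\,i+1}_{\fm}(R)]_0$, and the local cohomology degree that must survive to $R^+$ is $i+1$, not $i$. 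Since $0<i<\dim X$ gives $1<i+1<\dim R$, the logic is salvageable once the shift is corrected, but as written your indices do not match.

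Your first paragraph (the reduction to finding a class that is nonzero in $H^{*}_{\fm}(R^+)$) is correct, and you rightly flag the survival step as the crux. Two further comments on that step. First, the remark that Bhatt's paper works ``via derived perfectoidization'' is anachronistic: that technology postdates \cite{SmallCM} by several years. Bhatt's 2014 argument is more direct, exploiting that rigid cohomology is a Weil cohomology theory, so pullback along a proper surjective (in particular finite) morphism is split injective and respects the slope filtration; this is what forces the slope $<1$ piece to persist under finite covers. Second, your passage ``reducing modulo $p$ \ldots\ yields a nonzero $\alpha\in H^{i-1}(X,\mathcal{O}_X)$'' skates over a real issue: one must transport the non-vanishing from $H^{i}(X,W\mathcal{O}_X)[\tfrac{1}{p}]$ down to the local cohomology of section rings of \emph{arbitrary} module-finite extensions of $R$ (not merely those arising from finite covers of $X$), and the naive reduction map on Witt vectors does not by itself do this. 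Bhatt handles this with some care; your Artin--Schreier gloss gestures in the right direction but does not constitute a proof.
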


A \textit{polarized variety} $(X,L)$ is a pair of a projective variety $X$ with some ample line bundle $L$ on it. The module $H^*_{\rm{rig}}(X/W(k)[\frac{1}{p}])$ is the rigid cohomology and $H^*_{\rm{rig}}(X/W(k)[\frac{1}{p}])_{<1}$ is the maximal eigensubspace of $H^*_{\rm{rig}}(X/W(k)[\frac{1}{p}])$ on which the Frobenius acts with slope $<1$ (``the slope of the Frobenius action" is defined by the weight decomposition, as the rigid cohomology carries a structure as an $F$-isocrystal). We recall that for smooth proper varieties over a field (as is the case in our present paper) the rigid cohomology coincides with the crystalline cohomology after tensoring with the field of fractions of $W(k)$, which has a simpler formalism. For (general) references to the crystalline cohomology we refer to \cite{BerthelotCohomologie} and \cite[Tag 07GI]{StacksProject}, and to the rigid cohomology we refer to \cite{StumRigid} and \cite{WittVectorCoh}. 

For the sake of the reader's convenience we refer to some related definitions:

Let $k$ be a perfect field  of characteristic $p>0$,  $K$ be the fraction field of $W(k)$ which is equipped with the Frobenius homomorphism $\sigma$ lifting the Frobenius homomorphism $F$ of $k$, and let $K\{f\}$ be the skew polynomial ring over $K$ whose multiplication is subject to the rule $f\kappa=\sigma(\kappa)f$. As a reference to the definition of an  \textit{$F$-isocrystal} on $k$ (resp. a \textit{convergent $F$-isocrystal} on $k$),  see \cite[Definition 3.1]{LiedtkeLectures}  (resp. set $X=k$ and $P=W(k)$ in \cite[Definition 2.1]{KedlayaNotes}) and cf. \cite[Remark 2.10]{KedlayaNotes}.  In view of \cite[Theorem 3.4 (Dieudonn\'e-Manin)]{LiedtkeLectures} (cf. \cite[Theorem 3.2]{KedlayaNotes}), when $k$ is an algebraically closed field, any $F$-isocrystal $\mathcal{E}$ on $k$ admits a  decomposition $\mathcal{E}=\bigoplus\limits_{r/s\in \mathbb{Q}_{\ge 0}}\mathcal{E}_{r/s}$  such that each $\mathcal{E}_{r/s}$ is isomorphic to a finite direct sum of  copies of the $F$-isocrystal $\mathcal{F}_{r/s}:=K\{f\}/(f^s-p^r)$ (see  \cite[Definition 3.1]{KedlayaNotes} for another description of $\mathcal{F}_{r/s}$). Each rigid cohomology  $H^*_{\rm{rig}}(X/W(k)[\frac{1}{p}])$ is a  (convergent) $F$-isocrystal on $k$ and to define its slope $<n$ part, by base change to the algebraic closure   if necessary, we can assume that $k$ is algebraically closed. Then the slope $<n$ part of $H^i_{\rm{rig}}(X/W(k)[\frac{1}{p}])$ is defined as the direct summand part $\bigoplus\limits_{r/s<n}\mathcal{E}_{r/s}$ of  $H^i_{\rm{rig}}(X/W(k)[\frac{1}{p}])$ in a decomposition as mentioned above (see \cite[Definition 3.5]{LiedtkeLectures}, \cite[Definition 3.3]{KedlayaNotes} and \cite[5.2 Proof of Theorem 1.1, page 383]{WittVectorCoh}). For the relation of  slope and  $p$-adic valuation of   eigenvalues of the Frobenius actions, one useful reference is \cite[Theorem 2.2]{ManinTheTheory}.  In particular, when $X$ comes from a  finite field by base field change, due to  Katz-Messing result \cite{KatzMessing}, there is a relation between the $p$-adic valuation of  the eigenvalues of \'etale cohomologies $H^i_{\text{et}}(X,\mathbb{Q}_\ell)$ (with $\ell\neq p$) and the slopes of $H^i_{\rm{rig}}(X/W(k)[\frac{1}{p}])$. 

Let us recall the definition of $K3$ surfaces and their heights.

\begin{definition}
A smooth projective surface $X$ over a field is called a \textit{K3 surface} if
 $K_X \cong \mathcal{O}_X$ and $H^1(X,\mathcal{O}_X)=0$. The height of a $K3$ surface $X$ is defined as the height of the formal Brauer group of $X$ (see \cite[18 Brauer Group, Definition 3.3]{Huybrechts} where the definition given therein requires for the ground field to be separably closed).
\end{definition}

\begin{lemma}
\label{non-existence2}
Let $k$ be an algebraically closed field of characteristic $p>0$. Let $Y$ be a $K3$ surface over $k$ of finite height ($i.e.$ not supersingular) and let $L_1$ be an ample line bundle on $Y$. Consider the polarized variety $$\Big(X:=Y\times \mathbf{P}_k^1,\ L:=\pi_1^*(L_1)\otimes\pi_2^*\big(\mathcal{O}_{\mathbf{P}^1_k}(1)\big)\Big)$$ where $\pi_1$ and $\pi_2$ are projections to $Y$ and $\mathbf{P}^1_k$, respectively. Then 
\begin{enumerate}[(i)]
\item
\label{itm:MCADoesNotExist}
$R(X,L)$  satisfies the conclusion of Theorem \ref{non-existence}. Thus the completion of  $R(X,L)$ does not admit a small Cohen-Macaulay algebra.

\item
\label{itm:MCMExists} $R(X,L)$ (and so its completion)  does admit a small Cohen-Macaulay module.
\end{enumerate}
\end{lemma}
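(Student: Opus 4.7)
For part (i), the plan is to combine the identification (\ref{WittVectorCohSeq}) with Künneth for crystalline/rigid cohomology of smooth proper varieties to reduce the slope $<1$ nonvanishing on $X=\mathbf{P}^1\times Y$ to one on the K3 factor. Künneth for smooth proper varieties (applied degree by degree) gives
$$
H^2_{\rm{rig}}(X/W(k)[\tfrac{1}{p}])\cong \bigl(H^0_{\rm{rig}}(\mathbf{P}^1)\otimes H^2_{\rm{rig}}(Y)\bigr)\oplus\bigl(H^2_{\rm{rig}}(\mathbf{P}^1)\otimes H^0_{\rm{rig}}(Y)\bigr)
$$
since $H^1_{\rm{rig}}(\mathbf{P}^1)=0$. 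As the Künneth isomorphism is $F$-equivariant and slopes add under tensor product, and as $H^2_{\rm{rig}}(\mathbf{P}^1)$ is pure of slope $1$, extracting the slope $<1$ summand yields $H^2_{\rm{rig}}(X)_{<1}\cong H^2_{\rm{rig}}(Y)_{<1}$. By (\ref{WittVectorCohSeq}) this is $H^2(Y,W\mathcal{O}_Y)[\tfrac{1}{p}]$, which by the Artin--Mazur identification is the rationalized (covariant) Dieudonné module of the formal Brauer group $\widehat{\mathrm{Br}}(Y)$; since $Y$ has finite height $h\ge 1$, this module is free of rank $h$ over $W(k)$ and hence nonzero after inverting $p$. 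Because $\dim X=3$ so that $0<2<\dim X$, Theorem \ref{non-existence} applies and delivers the nonexistence of a small Cohen-Macaulay algebra over the completion.

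For part (ii), the plan is to produce an explicit small Cohen-Macaulay module as a twisted section module on $X$. Take the line bundle $\mathcal{F}:=\pi_2^*\mathcal{O}_{\mathbf{P}^1}(-1)$ and set $M:=\bigoplus_{n\in\mathbb{Z}} H^0(X,L^n\otimes\mathcal{F})$. Then $M$ is a finitely generated graded $R(X,L)$-module by Serre's theorem and has full support, so $\dim M=\dim X+1=4=\dim R(X,L)$. Being a saturated section module, $H^0_{\fm}(M)=H^1_{\fm}(M)=0$, and the standard identification $H^i_{\fm}(M)\cong\bigoplus_n H^{i-1}(X,L^n\otimes\mathcal{F})$ for $i\ge 2$ reduces the claim $\depth_\fm M=4$ to the vanishing $H^1(X,L^n\otimes\mathcal{F})=H^2(X,L^n\otimes\mathcal{F})=0$ for every $n\in\mathbb{Z}$. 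By Künneth,
$$
H^i(X,L^n\otimes\mathcal{F})\cong\bigoplus_{p+q=i} H^p(\mathbf{P}^1,\mathcal{O}(n-1))\otimes H^q(Y,L_1^n),
$$
and a case analysis using (a) $H^0(\mathbf{P}^1,\mathcal{O}(m))=0$ for $m\le -1$ and $H^1(\mathbf{P}^1,\mathcal{O}(m))=0$ for $m\ge -1$, (b) $H^1(Y,L_1^n)=0$ for every $n\in\mathbb{Z}$ (combining $H^1(Y,\mathcal{O}_Y)=0$ from the K3 condition, Kodaira-type vanishing for ample line bundles on K3 surfaces when $n\ge 1$, and Serre duality with $\omega_Y\cong \mathcal{O}_Y$), (c) $H^0(Y,L_1^n)=0$ for $n\le -1$, and (d) $H^2(Y,L_1^n)=0$ for $n\ge 1$ by Serre duality, disposes of every Künneth summand in the required range. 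Therefore $M$ is small Cohen-Macaulay, and the property passes to the $\fm$-adic completion.

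The main obstacle is the slope $<1$ nonvanishing in part (i), namely the input $H^2(Y,W\mathcal{O}_Y)[\tfrac{1}{p}]\neq 0$ for finite-height $Y$. This may be secured either via the Artin--Mazur identification with the Dieudonné module of the formal Brauer group (whose $W(k)$-rank equals the height), or by a Newton polygon symmetry argument: Poincaré duality for $H^2_{\rm{crys}}(Y)$ interchanges slopes $s$ and $2-s$, so $H^2_{\rm{rig}}(Y)_{<1}=0$ would force all $22$ crystalline slopes of $Y$ to equal $1$, i.e., $Y$ to be supersingular, contradicting the hypothesis of finite height. Once this input is in hand, the remainder of (i) is formal slope bookkeeping under Künneth, and (ii) is a direct Künneth-plus-cohomology check.
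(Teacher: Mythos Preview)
Your proof is correct. For part (i), you use K\"unneth for rigid cohomology to identify $H^2_{\rm rig}(X)_{<1}$ with $H^2_{\rm rig}(Y)_{<1}$, whereas the paper instead uses the injectivity of $\pi_1^*$ on any Weil cohomology (via Kleiman) to embed $H^2_{\rm rig}(Y)$ into $H^2_{\rm rig}(X)$ as $F$-isocrystals; both routes reduce to the same nonvanishing on the K3 factor, which you justify via Artin--Mazur or Newton polygon symmetry and the paper justifies by citing the same circle of results through Liedtke. For part (ii), the paper works in the Segre product language $R(X,L)\cong R(Y,L_1)\#k[x,y]$ and exhibits the module $R(Y,L_1)(1)\#k[x,y]$, i.e.\ the section module of the twist $\pi_1^*L_1$, while you twist on the $\mathbf{P}^1$ side by $\pi_2^*\mathcal{O}(-1)$ and compute directly with K\"unneth. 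The Goto--Watanabe Segre formula is exactly K\"unneth repackaged, so the mechanics are the same; your choice of twist is different from the paper's but equally valid, and your direct K\"unneth bookkeeping is arguably more transparent than routing through the Segre machinery. Both arguments rest on the same essential input, namely Kodaira vanishing for ample line bundles on K3 surfaces in positive characteristic together with $\omega_Y\cong\mathcal{O}_Y$.
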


\begin{proof}
Part \ref{itm:MCADoesNotExist}  is stated in \cite[Example 3.11]{SmallCM} without proof (the non-Cohen-Macaulayness of $R(X,L)$ itself will be shown in the sequel).  


\ref{itm:MCMExists} First, we recall the proof of the following fact:

\textbf{Fact:} Let $Y$ be a $K3$ surface over an algebraically closed field $k$ and let $D$ be an ample divisor on $Y$. Then $Y$ satisfies the Kodaira vanishing theorem (even in positive characteristic) and the section ring $R(Y,D)$ is Cohen-Macaulay.

\begin{proof}[Proof of the fact]
By \cite{XieACharacterization}, $Y$ satisfies the Kodaira vanishing theorem  even when $k$ has positive characteristic. By the same arguments as in the proof of \cite[Corollary 3.4(2)(a)]{KollarSingularities}, $R(Y,D)$ is Cohen-Macaulay when $k$ has characteristic $0$. But the proof of \cite[Corollary 3.4(2)(a)]{KollarSingularities}  works also for the case where $k$ has prime characteristic because of the validity of the Kodaira vanishing theorem for $Y$ in any characteristic.
\end{proof}

Let $\mathfrak{m}:=R(Y,L_1)_{+}$ be the irrelevant maximal ideal. In view of the above fact, we have
\begin{equation}
\label{EquationIsCM}
\forall\ i\neq 3,~H^i_{\fm}\big(R(Y,L_1)\big)=0.
\end{equation}
From $K_Y\cong \mathcal{O}_Y$ and Serre duality, we get the isomorphism appearing in 
\begin{equation}
\label{EquationNonVanishongInZeroDegree}
H^3_{\mathfrak{m}}\big(R(Y,L_1)\big)_{[0]}=H^2(Y,\mathcal{O}_Y)\cong H^0(Y,\mathcal{O}_Y)=k\neq 0.
\end{equation}
Also from $K_Y \cong \mathcal{O}_Y$ and the Kodaira vanishing theorem, we get
\begin{equation}
\label{EquationVanishingInPositiveDegrees}
\forall\ m>0,~H^3_{\mathfrak{m}}\big(R(Y,L_1)\big)_{[m]}=H^2(Y,L_1^m)=0.
\end{equation}
Since $k[x,y]=R\big(\mathbf{P}^1_k,\mathcal{O}_{\mathbf{P}^1_k}(1)\big)$ and pullback commutes with tensor product, it follows from \cite[Tag 0BED]{StacksProject} or \cite[Theorem 14]{KempfCohomology} for the $0$-th cohomologies that
$$
(A,\mathfrak{n}):=R(X,L) \cong R(Y,L_1)\#k[x,y].
$$ 
Then applying the formula for the local cohomologies of Segre products (see \cite[Theorem (4.1.5)]{GotoWatanabeOnGraded}) in view of (\ref{EquationIsCM}), (\ref{EquationNonVanishongInZeroDegree}) and (\ref{EquationVanishingInPositiveDegrees}), we observe that $A$ is a generalized Cohen-Macaulay graded ring of dimension $4$ whose only non-zero non-top local cohomology is $H^3_{\mathfrak{n}}(A)=H^3_{\mathfrak{n}}(A)_{[0]}=k$. Then, again the formula for local cohomology of Segre products  of modules shows that $R(Y,L_1)(1)\# k[x,y]$ is a small Cohen-Macaulay $A$-module.
\end{proof}

To prove Theorem \ref{non-existence1}, we are first going to construct a  smooth projective morphism $\mathcal{X} \to \Spec (V)$ with a polarization $\mathcal{L}$ on $\mathcal{X}$, where $W(k) \to V$ is a finite extension of discrete valuation rings. Then we will show that the completion of the section ring associated to $(\mathcal{X},\mathcal{L})$ gives what we want for Theorem \ref{non-existence1}.

\begin{proposition}
\label{crucial-isom}
Let the notation and hypotheses be as in Theorem \ref{non-existence1}, except that we now let $k$  be an arbitrary field of characteristic $p>0$ (not necessarily algebraically closed). Assume further that $H^1(X,L^n)=0$ for all $n \ge 1$ and $X$ is geometrically integral over $k$. Then there is an isomorphism of Noetherian graded rings
\begin{equation}
\label{gradedisom}
R(\mathcal{X},\mathcal{L})/(\pi) \cong R(X,L).
\end{equation}
\end{proposition}

\begin{proof}
We first prove the following claim.
\begin{claim}
The graded ring $R(\mathcal{X},\mathcal{L})$ is Noetherian.
\end{claim}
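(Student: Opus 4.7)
The plan is to prove Noetherianness of $R(\mathcal{X},\mathcal{L})$ by showing it is finitely generated as a $V$-algebra, using the isomorphism (\ref{gradedisom}) and the classical fact that the section ring of an ample line bundle is a finitely generated $k$-algebra. The strategy is lift-and-Nakayama, applied graded-piece-wise.

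First, I would establish the mod-$\pi$ structure of $R(\mathcal{X},\mathcal{L})$ by applying cohomology and base change (e.g.\ Hartshorne III.12.11) to the flat projective morphism $h$. For each $n\ge 1$, the hypothesis $H^1(X,L^n)=0$ forces $R^1h_*\mathcal{L}^n\otimes_V k=0$, and since this is a coherent $V$-module and $\pi$ lies in the maximal ideal, Nakayama gives $R^1h_*\mathcal{L}^n=0$. Cohomology and base change then yields that $H^0(\mathcal{X},\mathcal{L}^n)$ is a finite free $V$-module and the base-change map
\begin{equation*}
H^0(\mathcal{X},\mathcal{L}^n)\otimes_V k\longrightarrow H^0(X,L^n)
\end{equation*}
is an isomorphism. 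For $n=0$, I would use the short exact sequence $0\to\mathcal{O}_{\mathcal{X}}\xrightarrow{\pi}\mathcal{O}_{\mathcal{X}}\to\mathcal{O}_X\to 0$ together with the geometric integrality of $X$ (so $H^0(X,\mathcal{O}_X)=k$) to conclude that $H^0(\mathcal{X},\mathcal{O}_\mathcal{X})/\pi\hookrightarrow k$; Nakayama on the finite $V$-module $H^0(\mathcal{X},\mathcal{O}_\mathcal{X})$ (which contains $V$ and has no $\pi$-torsion by $V$-flatness of $\mathcal{X}$) then yields $H^0(\mathcal{X},\mathcal{O}_\mathcal{X})=V$. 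Assembling these degree by degree produces the graded ring isomorphism (\ref{gradedisom}).

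Next, since $L$ is ample on the projective variety $X$, a classical argument (pass to a sufficiently high Veronese where $L^m$ becomes very ample, obtain finite generation of the Veronese subring from a closed embedding into projective space, and use that $R(X,L)$ is finite over its Veronese) shows that $R(X,L)$ is a finitely generated $k$-algebra. Choose homogeneous generators $\bar{f}_1,\ldots,\bar{f}_r\in R(X,L)$ and lift them to homogeneous elements $f_1,\ldots,f_r\in R(\mathcal{X},\mathcal{L})$ of matching degrees. Let $A:=V[f_1,\ldots,f_r]\subseteq R(\mathcal{X},\mathcal{L})$. By construction the induced map $A/\pi A\to R(\mathcal{X},\mathcal{L})/\pi R(\mathcal{X},\mathcal{L})\cong R(X,L)$ is surjective, so $A+\pi R(\mathcal{X},\mathcal{L})=R(\mathcal{X},\mathcal{L})$.

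Finally, Nakayama in each degree closes the argument: for every $n$, the graded piece $R(\mathcal{X},\mathcal{L})_n=H^0(\mathcal{X},\mathcal{L}^n)$ is a finitely generated $V$-module by Step~1, and $A_n+\pi R(\mathcal{X},\mathcal{L})_n=R(\mathcal{X},\mathcal{L})_n$; since $\pi$ lies in the maximal ideal of the local ring $V$, Nakayama applied to the finitely generated $V$-module $R(\mathcal{X},\mathcal{L})_n/A_n$ yields $A_n=R(\mathcal{X},\mathcal{L})_n$. Therefore $A=R(\mathcal{X},\mathcal{L})$, so $R(\mathcal{X},\mathcal{L})$ is finitely generated over $V$, and in particular Noetherian. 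I expect the main obstacle to be the careful deployment of cohomology and base change in Step~1: the vanishing $H^1(X,L^n)=0$ is only assumed on the closed fiber, so getting the desired freeness of $h_*\mathcal{L}^n$ and the base-change isomorphism requires combining Grothendieck's criterion with the DVR hypothesis on $V$, and the $n=0$ case in particular must be handled by a separate direct argument because no cohomological vanishing is assumed there.
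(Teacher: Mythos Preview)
Your argument is correct, but it follows a genuinely different route from the paper. The paper proves Noetherianness of $R(\mathcal{X},\mathcal{L})$ \emph{without} invoking the hypothesis $H^1(X,L^n)=0$: it first treats the very ample case by embedding $\mathcal{X}\hookrightarrow\mathbf{P}^n_V$, uses Serre vanishing on the ideal sheaf to get a surjection $R(\mathbf{P}^n_V,\mathcal{O}(\ell))_+\twoheadrightarrow R(\mathcal{X},\mathcal{L}^{\otimes\ell})_+$ for some $\ell\gg 0$, and then passes from the Veronese back to $R(\mathcal{X},\mathcal{L})$ via a general finite-generation criterion (citing \cite{CoxRing}). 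Only afterwards does the paper establish the isomorphism $(\ref{gradedisom})$ by cohomology and base change. By contrast, you front-load the base-change argument to obtain $(\ref{gradedisom})$ (indeed your degree-$0$ and degree-$\ge 1$ analyses coincide with what the paper later does), and then deduce Noetherianness by lifting algebra generators of $R(X,L)$ and applying Nakayama graded-piece-wise.

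The trade-off is this: the paper's route shows that Noetherianness of $R(\mathcal{X},\mathcal{L})$ holds for \emph{any} ample $\mathcal{L}$ on a projective $V$-scheme, independent of the closed-fiber vanishing, and it keeps the two assertions of part $(\ref{itm:FirstPart})$ logically separate. Your route is more economical in the specific setting of Theorem \ref{non-existence1}, since it proves the Claim and the isomorphism $(\ref{gradedisom})$ in one stroke using only Nakayama and finite generation of $R(X,L)$ over $k$, avoiding the Veronese/Cox-ring machinery. A minor simplification in your Step~1: over the DVR $V$, freeness of $H^0(\mathcal{X},\mathcal{L}^n)$ is automatic once you know finiteness (from properness) and $\pi$-torsion-freeness (from $V$-flatness of $\mathcal{X}$), so you do not even need the full strength of base change for that particular point.
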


\begin{proof}[Proof of claim]
First, we prove the claim under the assumption that $\mathcal{L}$ is very ample. Then there is a closed immersion $i:\mathcal{X} \hookrightarrow \mathbf{P}^n_V$ so that $\mathcal{L}=i^*\mathcal{O}_{\mathbf{P}^n_V}(1)$, together with an exact sequence: $0 \to \mathcal{I}_\mathcal{X} \to \mathcal{O}_{\mathbf{P}^n_V} \to \mathcal{O}_{\mathcal{X}} \to 0$. Then for sufficiently large fixed $\ell>0$, we get $H^1(\mathbf{P}^n_V,\mathcal{I}_{\mathcal{X}} \otimes \mathcal{O}_{\mathbf{P}^n_V}(1)^{\otimes \ell m})=0$ for all $m \ge 1$ by Serre vanishing theorem. Then there is a surjection $R(\mathbf{P}^n_V,\mathcal{O}_{\mathbf{P}^n_V}(\ell))_{+} \to R(\mathcal{X},\mathcal{L}^{\otimes \ell})_{+}$. Since it is well known that $R(\mathbf{P}^n_V,\mathcal{O}_{\mathbf{P}^n_V}(\ell))$ and $H^0(\mathcal{X},\mathcal{O}_\mathcal{X})$
are Noetherian, it follows that $R(\mathcal{X},\mathcal{L}^{\otimes \ell})$ is Noetherian (see \cite[III, Theorem 5.1]{HartshorneAlgebraic}, and for the second finiteness see  \cite[Tag 02O5]{StacksProject}, \cite[III, Proposition 8.5]{HartshorneAlgebraic} and \cite[II, Theorem 4.9]{HartshorneAlgebraic}). The general case follows from \cite[Proposition 1.1.2.5]{CoxRing} by taking some Veronese subring of $R(\mathcal{X},\mathcal{L})$, as desired.
\end{proof}

Now we are prepared to prove the display $(\ref{gradedisom})$  in the statement. First, taking the long exact sequence associated to the short exact sequence 
$0 \to 
\widetilde{(\pi)}\mathcal{O}_{\mathcal{X}}\to \mathcal{O}_{\mathcal{X}} \to \mathcal{O}_X \to 0$, we get an injection $j_1:H^0(\mathcal{X},\mathcal{O}_{\mathcal{X}})/(\pi) \hookrightarrow H^0(X,\mathcal{O}_X)$. Since $X$ is geometrically integral over $k$, we have $H^0(X,\mathcal{O}_X)=k$. On the other hand, there is an injection $V \hookrightarrow H^0(\mathcal{X},\mathcal{O}_{\mathcal{X}})$, which induces $j_2:k \to H^0(\mathcal{X},\mathcal{O}_{\mathcal{X}})/(\pi)$. Now $j_1 \circ j_2$ is the identity, so it follows that $j_1$ is a bijection. Second, our hypothesis gives for all $n\ge 1$,  $H^1\big(\mathcal{X} \times \Spec(k),\mathcal{L}^n \otimes_V k\big)=H^1(X,L^n)=0$,
which implies that, for all $n\ge 1$, $\mathcal{L}^n$
is \textit{cohomologically flat in degree 0} along the smooth projective (flat) morphism $h:\mathcal{X} \to \Spec(V)$. By \cite[Remark 8.3.11.2]{FormalGeometry} (see also \cite[III, Theorem 12.11(a)]{HartshorneAlgebraic}) combined with the bijectivity of $j_1$, we obtain the required deformation identity appearing in the display $(\ref{gradedisom})$.
\end{proof}

\begin{theorem}\label{TheoremK3SurfaceDeformation}
	 Following the hypothesis and notation of Lemma \ref{non-existence2}, there is some complete discrete valuation ring  $(V,\pi,k)$  with  residue field $k$  and  a projective flat $\Spec(V)$-scheme  $\mathcal{Y}$  with an ample line bundle $\mathcal{L}_1$  obtained by deforming the pair $(Y,L_1)$  (thus $(\mathcal{Y},\mathcal{L}_1)$ specializes to $(Y,L_1)$ along the closed point of $V$).  Let $$(\mathcal{X},\mathcal{L}_t):=\big(\mathcal{Y}\times \mathbf{P}^1_V,\mathcal{L}_1^t\boxtimes \mathcal{O}_{\mathbf{P}^1_V}(1)\big)$$ which lifts  the pair $(X,L_t):=\big(Y\times \mathbf{P}_k^1,L_1^t\boxtimes \mathcal{O}_{\mathbf{P}^1_k}(1)\big)$.
	 
		  Then, for sufficiently large $t\in \mathbb{N}$,  the adic completion of $R(\mathcal{X},\mathcal{L}_t)$ along $(\pi)+R(\mathcal{X},\mathcal{L}_t)_+$  does not admit a small Cohen-Macaulay algebra  but it admits a small Cohen-Macaulay module (here, moreover, $R(\mathcal{X},\mathcal{L}_t)$ deforms $R(X,L_t)$).
\end{theorem} 
\begin{proof}

Over  $X=Y \times \mathbf{P}^1_k$, we let $\pi_1:X \to Y$ and $\pi_2:X \to \mathbf{P}^1_k$ be the projection maps, as in the statement of Lemma \ref{non-existence2}. Let also $L_2:=\mathcal{O}_{\mathbf{P}^1_k}(1)$, by which we consider the ample line bundle $L=\pi_1^*(L_1) \otimes \pi_2^*(L_2)$ on $X$.

By Deligne's result on lifting polarized $K3$ surfaces (see for example \cite[Theorem 8.5.27]{FormalGeometry}), there is a (possibly ramified) valuation ring $(V,\pi)$ that is module-finite over $W(k)$, together with a  smooth projective morphism $f:\mathcal{Y} \to \Spec(V)$ and an ample line bundle $\mathcal{L}_1$ on $\mathcal{Y}$ such that 
\begin{equation}
	\label{Deligne1}
	Y \cong \mathcal{Y} \times \Spec(k)~\mbox{and}~L_1 \cong \mathcal{L}_1 \otimes_V k.
\end{equation}
We apply the same procedure to $\mathbf{P}^1_k$ to get the smooth morphism $g:\mathbf{P}^1_{V} \to \Spec(V)$ with $\mathcal{L}_2$ which lifts $L_2$ (namely,
$\mathcal{L}_2:=\mathcal{O}_{\mathbf{P}^1_V}(1)$). Notice that $V/(\pi) \cong k$. Using $\mathcal{L}_1$ and $\mathcal{L}_2$, we can take, as in Lemma \ref{non-existence2}, the ample line bundle $\mathcal{L}$ on $h:\mathcal{X}=\mathcal{Y} \times_{\Spec(V)} \mathbf{P}^1_V \to \Spec(V)$ defined by $f$ and $g$: In other words,
using the projections $\eta_1:\mathcal{X} \to \mathcal{Y}$ and $\eta_2:\mathcal{X} \to \mathbf{P}^1_V$ we get the ample line bundle 	$\mathcal{L}=\eta_1^*(\mathcal{L}_1) \otimes \eta_2^*(\mathcal{L}_2)$ such that
\begin{equation}
	\label{Deligne2}
    \mathcal{L} \otimes_V k \cong L.
\end{equation}

After replacing $L_1$ (respectively, $\mathcal{L}_1$) with a sufficiently higher power, say $L_1^t$ (respectively $\mathcal{L}_1^t$)  (as in the statement), we may assume that
\begin{equation}
\label{SerreVanishing}
H^1(Y,L_1^n)=0;~\forall n \ge 0.
\end{equation}

For $n>0$, this is a consequence of Serre vanishing theorem for ample line bundles (\cite[III, Proposition 5.3]{HartshorneAlgebraic}), while the case $n=0$ is by the definition of $K3$ surfaces.  So for the rest of the proof we assume that the number $t$ in the statement is $1$ (i.e. $L_1^t=L_1,\mathcal{L}_1^t=\mathcal{L}_1,L_t=L$ and $\mathcal{L}_t=\mathcal{L}$) while (\ref{SerreVanishing}) is satisfied.

Consider the section ring $R(\mathcal{X},\mathcal{L})=\bigoplus_{n \ge 0}H^0(\mathcal{X},\mathcal{L}^n)$, in which case $R(\mathcal{X},\mathcal{L})$ is shown to be normal by the condition that $\mathcal{X}$ is a normal scheme. 
By the K\"unneth formula (see \cite[Tag 0BED]{StacksProject}) together with $(\ref{SerreVanishing})$, $(\ref{Deligne2})$ and $H^1(\mathbf{P}^1_k,\mathcal{O}_{\mathbf{P}^1_k}(n))=0$ ($n\ge 1$), we have
$$
H^1\big(\mathcal{X} \times \Spec(k),\mathcal{L}^n \otimes_V k\big)=H^1(X,L^n)=\bigoplus_{i+j=1} H^i(Y,L_1^n) \otimes_k H^j(\mathbf{P}^1_k,L_2^n)=0,
$$
so that $(\ref{gradedisom})$ holds. We define $(R,\fm)$ to be the completion of $R(\mathcal{X},\mathcal{L})$ with respect to the adic topology defined by powers of the maximal ideal $(\pi)+R(\mathcal{X},\mathcal{L})_{+}$. The goal is to show that $(R,\fm)$ admits no small Cohen-Macaulay algebra.

Now assume that $R$ has a small Cohen-Macaulay algebra $T$. After tensoring $R \to T$ with $R/(\pi)$, we get a small Cohen-Macaulay $R/(\pi)$-algebra $T/(\pi)$. But 
then combining Lemma \ref{non-existence2} and $(\ref{gradedisom})$ yields a contradiction, because $R/(\pi)$ is the completion of $R(X,L)$ along the maximal ideal. We conclude that $R$ admits no small Cohen-Macaulay algebra.

\begin{claim}
$R(\mathcal{X},\mathcal{L})$ admits a small Cohen-Macaulay module. 
\end{claim}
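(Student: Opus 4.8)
The plan is to produce an explicit small Cohen-Macaulay $R(\mathcal{X},\mathcal{L})$-module by $p$-adically lifting the small Cohen-Macaulay $R(X,L)$-module $N:=R(Y,L_1)(1)\# k[x,y]$ constructed in Lemma \ref{non-existence2}\ref{itm:MCMExists}. The candidate will be the graded $R(\mathcal{X},\mathcal{L})$-module $M:=\bigoplus_{n\ge 0}H^0(\mathcal{X},\eta_1^*\mathcal{L}_1\otimes\mathcal{L}^n)$. Since $\mathcal{L}=\eta_1^*(\mathcal{L}_1)\otimes\eta_2^*(\mathcal{L}_2)$, one has $\eta_1^*\mathcal{L}_1\otimes\mathcal{L}^n=\eta_1^*(\mathcal{L}_1^{n+1})\otimes\eta_2^*(\mathcal{L}_2^n)$, so by the K\"unneth formula \cite[Tag 0BED]{StacksProject} the graded piece $M_n$ equals $H^0(\mathcal{Y},\mathcal{L}_1^{n+1})\otimes_V H^0(\mathbf{P}^1_V,\mathcal{L}_2^n)$; this is the mixed characteristic avatar of $N=\bigoplus_{n\ge0}H^0(X,\pi_1^*L_1\otimes L^n)$. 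I would then check, in order: (a) $M$ is finitely generated over $R(\mathcal{X},\mathcal{L})$; (b) $\pi$ is a nonzerodivisor on $M$; (c) $M/\pi M\cong N$ as graded $R(X,L)$-modules; (d) a depth count completes the argument, after which $\widehat{M}:=M\otimes_{R(\mathcal{X},\mathcal{L})}R$ is a small Cohen-Macaulay $R$-module by faithful flatness of completion.

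For (a), I would repeat the argument of the first Claim of this proof: pass to a Veronese subring so that $\mathcal{L}$ becomes very ample, realize $M$ as a quotient of $\bigoplus_{n\ge0}H^0(\mathbf{P}^N_V,\mathcal{F}'\otimes\mathcal{O}_{\mathbf{P}^N_V}(n))$ for a suitable coherent sheaf $\mathcal{F}'$ (Serre vanishing), and descend to $R(\mathcal{X},\mathcal{L})$ using \cite[Proposition 1.1.2.5]{CoxRing} together with \cite[III, Theorem 5.1]{HartshorneAlgebraic}. For (b), each $M_n=H^0(\mathcal{Y},\mathcal{L}_1^{n+1})\otimes_V H^0(\mathbf{P}^1_V,\mathcal{L}_2^n)$ is a finite free $V$-module, being a tensor product over the discrete valuation ring $V$ of two finite torsion-free $V$-modules (the sheaves $\mathcal{L}_1^{n+1}$, $\mathcal{L}_2^n$ are $V$-flat and $f,g$ are proper); hence $M$ is $V$-flat, and in particular $\pi$ is $M$-regular.

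For (c), the key point is cohomological flatness in degree $0$ of $\eta_1^*\mathcal{L}_1\otimes\mathcal{L}^n$ along $h$. By K\"unneth, for $n\ge 0$,
$$
H^1(X,\pi_1^*L_1\otimes L^n)=\Big(H^1(Y,L_1^{n+1})\otimes_k H^0(\mathbf{P}^1_k,L_2^n)\Big)\oplus\Big(H^0(Y,L_1^{n+1})\otimes_k H^1(\mathbf{P}^1_k,L_2^n)\Big),
$$
which vanishes since $H^1(Y,L_1^{n+1})=0$ by $(\ref{SerreVanishing})$ and $H^1(\mathbf{P}^1_k,\mathcal{O}_{\mathbf{P}^1_k}(n))=0$ for $n\ge 0$. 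Therefore \cite[III, Theorem 12.11(a)]{HartshorneAlgebraic} gives $M_n/\pi M_n\cong H^0(X,\pi_1^*L_1\otimes L^n)$ for every $n\ge 0$; summing over $n$ and invoking $(\ref{Deligne3})$, this assembles to an isomorphism $M/\pi M\cong N$ of graded $R(\mathcal{X},\mathcal{L})/(\pi)=R(X,L)$-modules.

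For (d), set $\fn:=(\pi)+R(\mathcal{X},\mathcal{L})_{+}$. Since $\pi$ is a nonzerodivisor in the domain $R(\mathcal{X},\mathcal{L})_{\fn}$ and $R(\mathcal{X},\mathcal{L})/(\pi)\cong R(X,L)$ has dimension $4$, we have $\dim R(\mathcal{X},\mathcal{L})_{\fn}=5$. As $\pi\in\fn$ is $M$-regular with $M/\pi M\cong N$ and $\depth_{R(X,L)_{+}}N=4=\dim R(X,L)$ by Lemma \ref{non-existence2}\ref{itm:MCMExists}, the standard identity $\depth_{\fn}M=\depth_{\fn/(\pi)}(M/\pi M)+1$ yields $\depth_{\fn}M=5=\dim R(\mathcal{X},\mathcal{L})$, while graded Nakayama gives $M\ne\fn M$; hence $M$ is a small Cohen-Macaulay module over $R(\mathcal{X},\mathcal{L})$. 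I expect the only real friction to be the finite-generation bookkeeping in step (a) — the section-module finiteness over the $V$-projective scheme $\mathcal{X}$ polarized by a merely ample bundle — whereas step (c), the $p$-adic descent of $N$ via cohomological flatness, is the conceptual heart and essentially the reason for arranging $(\ref{SerreVanishing})$ in the first place.
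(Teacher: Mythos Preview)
Your proposal is correct and follows essentially the same approach as the paper: you construct the same module $M=\bigoplus_{n\ge 0}H^0(\mathcal{X},\eta_1^*(\mathcal{L}_1)^{n+1}\otimes\eta_2^*(\mathcal{L}_2)^n)$, verify the K\"unneth vanishing $H^1(X,\pi_1^*L_1^{n+1}\otimes\pi_2^*L_2^n)=0$, and use cohomological flatness in degree $0$ to identify $M/\pi M$ with $R(Y,L_1)(1)\#k[x,y]$, exactly as the paper does. The only differences are cosmetic: you invoke K\"unneth over $V$ and \cite[III, Theorem 12.11(a)]{HartshorneAlgebraic} directly, whereas the paper routes through the projection formula along the closed immersion $\beta:X\hookrightarrow\mathcal{X}$; and you spell out finite generation and the depth count, which the paper leaves implicit.
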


\begin{proof}[Proof of claim]
Let us put
$$
M:=\bigoplus_{n \ge 0} H^0\big(\mathcal{X},\eta_1^*(\mathcal{L}_1)^{n+1}\otimes \eta_2^*(\mathcal{L}_2)^n\big),
$$
which is an $R(\mathcal{X},\mathcal{L})$-module by definition. Consider the commutative diagram
\begin{center}$\begin{CD}
X=Y\times_k\mathbf{P}^1_k @>\beta>> \mathcal{X}=\mathcal{Y}\times_V \mathbf{P}^1_V \\ @V \pi_1 VV @ V \eta_1 VV \\ Y @> \alpha_1 >> \mathcal{Y}
\end{CD}$
\end{center}
where $\beta$ and $\alpha_1$ are closed immersions with respect to the ideal sheaf $\widetilde{(\pi)}$. We have
$$
\alpha_1^*(\mathcal{L}_1)\cong L_1
$$
from the aforementioned lifting property. Letting $n\in \mathbb{N}_0$, since pullback commutes with tensor products, we have
$$
\beta^*\big(\eta_1^*(\mathcal{L}_1)^{n+1}\big)= (\eta_1 \circ \beta)^*(\mathcal{L}_1^{n+1})= (\alpha_1\circ \pi_1)^*(\mathcal{L}_1^{n+1})=\pi_1^*(L_1)^{n+1},
$$
which then yields
\begin{equation}
\label{EquationNumberONE}
\beta_*\big(\pi_1^*(L_1)^{n+1}\big)\cong \beta_*\Big( \beta^*\big(\eta_1^*(\mathcal{L}_1)^{n+1}\big)\Big)\overset{\text{\cite[Tag 04CI(1)]{StacksProject}}}{\cong}  \big(\eta_1^*(\mathcal{L}_1)^{n+1}\big)\otimes_V k.
\end{equation}

Arguing by considering a similar commutative diagram as above, we have $\beta^*\big(\eta_2^*(\mathcal{L}_2)^{n}\big)=\pi_2^*(L_2)^{n}$ and hence, 
\begin{equation}
\label{EquationNumberTWO}
\beta_*\big(\pi_2^*(L_2)^{n}\big)\cong \beta_*\Big(\beta^*\big(\eta_2^*(\mathcal{L}_2)^{n}\big)\Big)\overset{\text{\cite[Tag 04CI(1)]{StacksProject}}}{\cong} \big(\eta_2^*(\mathcal{L}_2)^{n}\big)\otimes_Vk
\end{equation}

Recalling that pushforward commutes with tensor product along closed immersions, $(\ref{EquationNumberONE})$ combined with $(\ref{EquationNumberTWO})$ implies that
$$
\forall\ n\in \mathbb{N}_0,~\beta_*\big(\pi_1^*(L_1)^{n+1}\otimes \pi_2^*(L_2)^{n}\big)=\big(\eta_1^*(\mathcal{L}_1)^{n+1}\otimes \eta_2^*(\mathcal{L}_2)^{n}\big)\otimes_Vk.
$$
Consequently, since $\beta:X\rightarrow \mathcal{X}$ is an affine morphism, we have 
\begin{align*} 
H^1\Big(\mathcal{X},\big(\eta_1^*(\mathcal{L}_1)^{n+1}\otimes \eta_2^*(\mathcal{L}_2)^{n}\big)\otimes_Vk\Big)&\cong H^1\big(X,\pi_1^*(L_1)^{n+1}\otimes \pi_2^*(L_2)^{n}\big)&\\&\overset{}{=}0
\end{align*}
by the K\"unneth formula \cite[Tag 0BED]{StacksProject}. So this implies that
$$
M/\pi M \cong \bigoplus_{n \ge 0} H^0\big(X,\pi_1^*(L_1)^{n+1}\otimes \pi_2^*(L_2)^{n}\big)\cong R(Y,L_1)(1)\# k[x,y].
$$
Therefore, $M$ is a small Cohen-Macaulay $R(\mathcal{X},\mathcal{L})$-module, as seen in the proof of Lemma \ref{non-existence2}(ii).
\end{proof}
So we end the proof of the theorem.
\end{proof}

\begin{remark}
\label{finalremark}
\begin{enumerate}
\item
The proof of Theorem \ref{non-existence1} contains the following principle: There is a good criterion to determine whether a given proper variety over a perfect field of characteristic $p>0$ admits a flat lifting to the ring of Witt vectors. It is also possible to lift a graded ring to the Witt vectors (simply lift the coefficients of the equations involved in the defining ideal). However, it is hard to check if the lifting can be taken to be flat.

\item
As already noted in \cite{SmallCM}, other projective varieties than $K3$ surfaces would suffice to achieve our goal, as long as the criterion, which is Theorem \ref{non-existence}, can be applied.
Let $X$ be a smooth projective variety over an algebraically closed field $k$ of characteristic $p>0$.
In \cite{WittVectorCoh}, it is shown that there is an isomorphism:
\begin{equation}
\label{WittVectorCohSeq}
H^i_{\rm{rig}}(X/W(k)[\frac{1}{p}])_{<1} \cong H^i(X,W\mathcal{O}_X)[\frac{1}{p}].
\end{equation}

Then the exact sequence of Abelian sheaves $0 \to \mathcal{O}_X \to W_{n+1}\mathcal{O}_X \to W_n\mathcal{O}_X \to 0$ in the Zariski topology gives a long exact sequence 
\begin{equation}
\label{WittLongExt}
\cdots \to H^i(X,\mathcal{O}_X) \to H^i(X,W_{n+1}\mathcal{O}_X) \xrightarrow{\delta_n} H^i(X,W_{n}\mathcal{O}_X) \to H^{i+1}(X,\mathcal{O}_X) \to \cdots,
\end{equation}
where $H^i(X,W_{n}\mathcal{O}_X)$ is the $n$-th truncated Witt-vector cohomology, due originally to Serre. Then we have an isomorphism $H^i(X,W\mathcal{O}_X) \cong \varprojlim_{n \to \infty} H^i(X,W_{n}\mathcal{O}_X)$ (c.f. \cite{WittVectorCoh}), where the transition map is given by $\delta_n$. Notice that $H^i(X,W_{0}\mathcal{O}_X)=H^i(X,\mathcal{O}_X)$ for any $i \ge 0$ by the formalism of Witt vectors. We obtain the following.
\begin{enumerate}
\item[$\bullet$] 
Under the notation as above, assume that $X$ satisfies the condition $H^i(X,\mathcal{O}_X) \ne 0$ and $H^{i+1}(X,\mathcal{O}_X)=0$ for some $0<i<\dim X$. Then $H^i(X,W\mathcal{O}_X) \ne 0$.
\end{enumerate}  
However, it is usually difficult to know whether $H^i(X,W\mathcal{O}_X)$ is $p$-torsion free, or not. Therefore, it is more desirable to explore the left side of the isomorphism $(\ref{WittVectorCohSeq})$
in order to find examples for our purpose, while the Witt-vector cohomology can be useful via $H^i(X,W_{0}\mathcal{O}_X)=H^i(X,\mathcal{O}_X)$ and $(\ref{WittLongExt})$. For recent results on $p$-torsion issues on various cohomology theories, we refer the reader to \cite{IntegralHodge}.
\end{enumerate}
\end{remark}

\subsection{An example arising from $p$-adic deformations of Abelian varieties}

 For the definition of a group scheme $G$ over a scheme $S$ (respectively, over a field $k$), see \cite[Tag 022S]{StacksProject}. For the definition and notable properties of their important subclass, Abelian varieties over a field, see \cite[Tag 03RO, Tag 0BFA and Tag 0BFC]{StacksProject}. 

For references to the results in the literature on Abelian varieties over a field, in parallel to \cite{Mumford}, we occasionally refer to \cite{EdixhovenGeerMoonenAbelian}, \cite{OortFinite} and \cite{StacksProject} because some  results are not explicitly written in \cite{Mumford} as we need them. Moreover, \cite{Mumford} often imposes a standing  assumption that the base field  of the considered Abelian varieties is algebraically closed which is not always needed.  The notation $X(k)$ for an Abelian variety $X$ over a field $k$, which is the \textit{set of $k$-valued points of $X$}, is the set of $k$-morphisms from $\Spec(k)$ to $X$ and it admits an abstract group structure induced by the scheme group structure of $X$.

We recall that an \textit{isogeny} of Abelian varieties (necessarily of the same dimension) is a surjective group morphism of Abelian varieties whose kernel is a finite group scheme (see e.g. \cite[page 63]{Mumford} and \cite[Definition (5.3)]{EdixhovenGeerMoonenAbelian}). The \textit{dual Abelian variety} $X^{\vee}$ of an Abelian variety $X$ over a field $k$ can be defined as the identity component $\Pic_{X/k}^0$ of the Picard scheme $\Pic_{X/k}$ (see \cite[Definition and Notation (6.19)]{EdixhovenGeerMoonenAbelian}, see also \cite[Chapter VI]{EdixhovenGeerMoonenAbelian} for the notion, some existence results and properties of the Picard scheme and its identity component; see also \cite[The identity component (3.16)]{EdixhovenGeerMoonenAbelian} for the definition of the \textit{identity component}). In particular, the abstract group of $k$-valued points of $X^{\vee}=\Pic^{0}_{X/k}$ is a subgroup of $\Pic(X)$, see \cite[page 125, second paragraph]{Mumford}. The \textit{dual Abelian variety} of $X$ is defined alternatively in \cite[III, \S 13]{Mumford} as a group quotient of $X$ (cf. \cite[Theorem (6.18)]{EdixhovenGeerMoonenAbelian}). Before proceeding, we recall the definition of polarized Abelian varieties.

\begin{remark}\label{RemarkMumfordBundlePolarization}
Let $L$ be a line bundle on an Abelian variety $X$ and let $\Lambda(L)$ be its associated  \textit{Mumford line bundle} on $X\times X$, i.e. $\Lambda(L)=m^*(L)\otimes \pr_1^*(L^{-1})\otimes \pr_2^*(L^{-1})$, where $m$ is the addition  morphism and $\pr_1$ (respectively, $\pr_2$) is the projection to the first (respectively, second) copy of  $X$ in $X\times X$. Using $\Lambda(L)$, one obtains a homomorphism $\varphi_L:X\rightarrow X^\vee$ (depending on the choice of  $L$) which at the level of $k$-points coincides with $a\in X(k)\mapsto t_a^*(L)\otimes L^{-1}$ (\cite[paragraph before  Theorem (6.18)]{EdixhovenGeerMoonenAbelian}, or \cite[paragraph before Lemma (2.3.1)]{OortFinite}). Here, $t_a$ is the \textit{translation morphism} $x \mapsto x+a$. More precisely, $t_a$ is the morphism
$$
\begin{CD}X@>\cong>>X\times k@>\id\times a>>X\times X@>\text{addition morphism}\ >>X\end{CD}.
$$
\end{remark}

\begin{definition} (\cite[Definition (11.6)]{EdixhovenGeerMoonenAbelian})
Let $X$ be an Abelian variety over a field $k$. A \textit{polarized} Abelian variety is a pair $(X,\lambda)$ where  $\lambda:X \to X^{\vee}$ is an isogeny from $X$ to its dual Abelian variety $X^\vee$ such that, up to a base change to a finite separable extension of $k$, $\lambda$ agrees with $\varphi_L$ for some ample line bundle $L$  (as described in Remark \ref{RemarkMumfordBundlePolarization}).
\end{definition}

This definition can be extended to an Abelian scheme.  
Moreover, by the basic theory of Abelian varieties, it is known that two ample line bundles $L$ and $M$ determine the same isogeny $\lambda:X \to X^\vee$ if and only if $t_a^*L \cong M$ for some $a \in X(k)$. So by abuse of notation, a specification of some ample line bundle $L$ on an Abelian variety $X$ will denote the polarization in what follows. The following theorem is of great value.

\begin{theorem}[Norman]
\label{NormanThm}
Let $k$ be either a perfect field of characteristic $p=3$, or an arbitrary field of characteristic $p\ge 5$. Assume that $(X,\lambda')$ is a polarized Abelian variety over $k$. Then there exists a polarized Abelian scheme $h:(\mathcal{X},\lambda) \to \Spec\big(W(k)[\sqrt{p}]\big)$ that specializes to $(X,\lambda')$ along the closed fiber of $h$.
\end{theorem}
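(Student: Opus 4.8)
The plan is to deduce the statement from Norman's lifting theorem (\cite{Norman}; see also \cite{NormanOort}) via the standard reduction to deformations of Barsotti--Tate groups; I sketch the argument. First I would replace $(X,\lambda')$ by its associated polarized $p$-divisible group $G:=X[p^{\infty}]$, equipped with the isogeny $G\to G^{\vee}$ induced by $\lambda'$. By the Serre--Tate theorem, for an Artinian local $W(k)$-algebra $R_0$ with residue field $k$, deforming $X$ to $R_0$ is equivalent to deforming $G$ to $R_0$, and this equivalence is compatible with Cartier duality and hence with polarizations. Passing to the inverse limit over $R_0=W(k)[\sqrt p]/(\sqrt p)^{n}$, it then suffices to construct a compatible system of deformations of the polarized $p$-divisible group over these Artinian quotients: Serre--Tate produces from it a formal polarized abelian scheme over the complete local ring $W(k)[\sqrt p]$, and since a polarization renders an abelian scheme projective, Grothendieck's existence theorem algebraizes it to a polarized abelian scheme $h\colon(\mathcal{X},\lambda)\to\Spec\big(W(k)[\sqrt p]\big)$ whose closed fiber is $(X,\lambda')$.

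It remains to deform the polarized $p$-divisible group, and this is where the real work lies. By Grothendieck--Messing crystalline deformation theory, deformations of $G$ along a divided power thickening $R_1\twoheadrightarrow R_0$ correspond to lifts of the Hodge filtration $\mathrm{Fil}\subset\mathbb{D}(G)(R_0)$ to a locally free direct summand of the Dieudonn\'e crystal evaluated on $R_1$, and deforming the \emph{polarized} group amounts to choosing this lift isotropic for the alternating pairing on $\mathbb{D}(G)$ induced by $\lambda'$. If $\deg\lambda'$ is prime to $p$ this pairing is perfect, the parametrizing space is a smooth Lagrangian Grassmannian, and isotropic lifts always exist step by step — in that case one even obtains a lift over $W(k)$ itself. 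The substantive case, and the main obstacle, is when the pairing has elementary divisors divisible by $p$: then the Hodge filtration may fail to lift isotropically over $W(k)$, and Norman's theorem is precisely the assertion that the ramified quadratic base change to $W(k)[\sqrt p]=W(k)[\pi]/(\pi^2-p)$ supplies exactly the extra room needed to solve the resulting isotropy equations. The hypothesis $p\ge 3$ enters at this point: it is exactly the condition making $W(k)[\sqrt p]$ a tamely ramified extension of $W(k)$ (equivalently, $2$ invertible there), under which the relevant quadratic normal form and successive approximation arguments go through.

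In short, the only genuinely hard ingredient is Norman's isotropic lifting construction over the ramified base; the remaining steps (Serre--Tate, Grothendieck--Messing, and Grothendieck algebraization) are formal. Accordingly, for the statement in the form quoted I would simply invoke \cite{Norman} (resp. \cite{NormanOort}).
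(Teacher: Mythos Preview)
Your proposal is correct and aligns with the paper's treatment: the paper does not give its own proof of this statement but simply cites it as the main theorem of Norman's paper \cite{Norman} (noting it is stated there in greater generality), which is exactly what you ultimately do. Your additional sketch of the Serre--Tate/Grothendieck--Messing mechanism and the role of the ramified base change is accurate and more informative than the paper's bare citation, but it is supplementary rather than a different approach.
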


This is the main theorem in Norman's paper \cite{Norman}, where he states the result in a more general form. In  Norman's theorem, an Abelian scheme over $\Spec\big(W(k)[\sqrt{p}]\big)$ is by definition a smooth and proper group scheme over $\Spec\big(W(k)[\sqrt{p}]\big)$ with geometrically connected fibers (whose group structure is 
necessarily commutative), see \cite[Definition 1.1]{FaltingsChaiDegeneration}. Moreover, see \cite[pages 2-4 and Definition 1.6]{FaltingsChaiDegeneration} for the definition of the dual Abelian scheme as well as a polarization  of the Abelian scheme $\mathcal{X}$  in the statement of  Theorem \ref{NormanThm} (see also \cite[page 233 and Theorem 5]{BoschNeron} for more explanation concerning the dual Abelian schemes). These are defined using an open subscheme of the  Picard scheme generalizing  the case of Abelian varieties over a field as mentioned above.




In order to lift an ample line bundle on an Abelian variety,  one can alternatively lift its associated polarization, which is itself of independent interest:

\begin{lemma}
\label{LemmaLiftingPolizarationAndLiftingOfLineBundles}
Let $R$ be a complete discrete valuation domain with residue field $k$. Let $(X,\lambda)$ (respectively, $(X',\lambda')$) be a polarized Abelian scheme (respectively,  polarized Abelian variety) over $\Spec(R)$ (respectively,  $\Spec(k)$) such that $\lambda$ lifts $\lambda'$ and $\lambda'=\varphi_{L'}$ for some ample line bundle $L'$  over $X'$. Then there exists an ample line bundle $L$ on $X$ lifting $L'$.	
\end{lemma}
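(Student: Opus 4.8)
The plan is to lift $L'$ step by step along the infinitesimal thickenings of the closed fibre and then algebraize. Set $R_n:=R/\fm^{n+1}$, $X_n:=X\times_R R_n$ and $\lambda_n:=\lambda\times_R R_n$, so that $X_0=X'$ and $\lambda_0=\lambda'=\varphi_{L'}$. I will construct inductively line bundles $L_n$ on $X_n$ with $L_n|_{X_{n-1}}\cong L_{n-1}$ and $\varphi_{L_n}=\lambda_n$, starting from $L_0:=L'$. Granting this, $\{L_n\}$ is a line bundle on the formal completion of $X$ along $X_0$, which by Grothendieck's existence theorem (applicable since $X\to\Spec(R)$ is proper and $R$ is complete) algebraizes to a line bundle $L$ on $X$ with $L\otimes_R k\cong L'$. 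Finally $L$ is ample: the locus of $s\in\Spec(R)$ over which $L$ restricts to an ample bundle on the fibre of $X\to\Spec(R)$ is open and contains the closed point (because $L'$ is ample), hence equals $\Spec(R)$; since $\Spec(R)$ is affine, relative ampleness yields ampleness of $L$ on $X$.

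For the inductive step, assume $L_n$ is given with $\varphi_{L_n}=\lambda_n$. The obstruction to extending the line bundle $L_n$ across the square-zero surjection $R_{n+1}\twoheadrightarrow R_n$ is a class $o(L_n)\in H^2(X_0,\mathcal{O}_{X_0})\otimes_k(\fm^{n+1}/\fm^{n+2})$, and if it vanishes the extensions form a torsor under $H^1(X_0,\mathcal{O}_{X_0})\otimes_k(\fm^{n+1}/\fm^{n+2})$. To see that $o(L_n)=0$, consider the Mumford bundle $\Lambda(L_n)=m^*L_n\otimes\pr_1^*L_n^{-1}\otimes\pr_2^*L_n^{-1}$ on $X_n\times_{R_n}X_n$ (notation as in Remark~\ref{RemarkMumfordBundlePolarization}). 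Because $\varphi_{L_n}=\lambda_n$, one has $\Lambda(L_n)\cong(\id\times\lambda_n)^*\mathcal{P}_n$ as rigidified bundles, where $\mathcal{P}_n$ is the Poincar\'e bundle of $X_n^{\vee}/R_n$; the right-hand side visibly lifts to $(\id\times\lambda_{n+1})^*\mathcal{P}_{n+1}$ over $X_{n+1}\times_{R_{n+1}}X_{n+1}$, so $\Lambda(L_n)$ lifts and $o(\Lambda(L_n))=0$. Since the obstruction class is additive in the line bundle and functorial under pullback along morphisms that lift to the thickening, applying this to $m,\pr_1,\pr_2\colon X_n\times_{R_n}X_n\to X_n$ gives $o(\Lambda(L_n))=m^*o(L_n)-\pr_1^*o(L_n)-\pr_2^*o(L_n)$ in $H^2(X_0\times_k X_0,\mathcal{O})\otimes_k(\fm^{n+1}/\fm^{n+2})$. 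A K\"unneth computation on the abelian variety $X_0$ (using $H^i(\mathcal{O}_{X_0})=\wedge^i H^1(\mathcal{O}_{X_0})$, the splitting $H^1(\mathcal{O}_{X_0\times X_0})=H^1(\mathcal{O}_{X_0})^{\oplus 2}$, and $m^*=\pr_1^*+\pr_2^*$ on $H^1(\mathcal{O}_{X_0})$) identifies this expression with the image of $o(L_n)$ under the natural injection $\wedge^2 H^1(\mathcal{O}_{X_0})\hookrightarrow H^1(\mathcal{O}_{X_0})\otimes_k H^1(\mathcal{O}_{X_0})$. Hence $o(L_n)=0$ and $L_n$ extends to some line bundle $L_{n+1}$ on $X_{n+1}$. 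Finally $\varphi_{L_{n+1}}=\lambda_{n+1}$: both are homomorphisms $X_{n+1}\to X_{n+1}^{\vee}$ reducing to $\varphi_{L_n}=\lambda_n$ over $X_n$, and homomorphisms of abelian schemes have no nonzero infinitesimal deformations (rigidity), so they coincide; thus the induction proceeds.

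The one substantial point is the vanishing $o(L_n)=0$ — equivalently, the fact that the deformation problem for the pair $(X_0,L_0)$ becomes unobstructed once its polarization $\varphi_{L_0}$ is known to deform; the argument above is one way to see it, and it is also part of the standard deformation theory of polarized abelian varieties (cf.\ \cite{Norman}, \cite{FaltingsChaiDegeneration}). Alternatively one can package the whole lifting as: the fibre $\varphi^{-1}(\lambda)$ inside the Picard scheme $\Pic_{X/R}$ is a pseudo-torsor under $\Pic^0_{X/R}=X^{\vee}$ which is nonempty and smooth over the closed point, hence (by the same unobstructedness) smooth over $\Spec(R)$, so it has an $R$-point since $R$ is Henselian; one then corrects by an element of $X^{\vee}(R)$ to land on a line bundle reducing to $L'$. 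The remaining ingredients — the equality $\Lambda(L)\cong(\id\times\varphi_L)^*\mathcal{P}$ characterizing $\varphi_L$, rigidity of homomorphisms of abelian schemes, the obstruction calculus for line bundles, Grothendieck's existence theorem, and openness of the ample locus for proper morphisms — are all standard.
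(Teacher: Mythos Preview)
Your proof is correct and follows the same overall strategy as the paper: lift $L'$ inductively along the thickenings $X_n=X\times_R R/\fm^{n+1}$ while keeping track of the condition $\varphi_{L_n}=\lambda_n$, and then algebraize via Grothendieck's existence theorem. The only difference is one of packaging: for the inductive step the paper simply invokes \cite[Lemma~(2.3.2)]{OortFinite} (together with \cite[Lemma~(2.3.1)]{OortFinite}) as a black box, whereas you reprove that lemma by the explicit obstruction-theoretic computation with the Mumford bundle and the K\"unneth/antisymmetrization argument. Your additional remarks---the verification of ampleness of the algebraized $L$ via openness of the ample locus, and the alternative Picard-scheme/Henselian phrasing---are not in the paper's proof but are welcome clarifications; conversely the paper makes explicit the passage from the compatible system $\{L_n\}$ to a formal line bundle (via a Mittag--Leffler check and \cite[II, Exercise~9.6]{HartshorneAlgebraic}), which you subsume in the sentence ``$\{L_n\}$ is a line bundle on the formal completion''.
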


\begin{proof}
Let $y$ be the closed point of $\Spec(R)$ defining $\mathfrak{G}:=\text{Spf}(R)$.  Also  let   $\mathfrak{X}$  be the formal completion of $X$  along the   inverse image of $y$ in $X$. We notice that the  morphism of formal schemes $\mathfrak{X}\rightarrow \mathfrak{G}$, induced by the proper morphism $X\rightarrow \Spec(R)$, is proper (see \cite[(3.4.1), page 119]{GrothendieckEGAIII} for the definition of a proper morphism of formal schemes). Also $R$ is an adic ring in the sense of \cite[Definition (7.1.9)]{GrothendieckEGAI} with the principal prime ideal of definition $\fm=\pi R$. Thus we can appeal to \cite[Theorem 5.4.5]{GrothendieckEGAIII},  which shows that the statement follows from the existence of a line bundle on $\mathfrak{X}$ lifting $L'$.

For each $n\in \mathbb{N}$, we set  $\mathcal{X}_n=\mathcal{X}\times_{\mathfrak{G}} (\Spec(R/\pi^n R))(=X_n:=X\times_{\Spec(R)}(\text{Spec}(R/\pi^n R)))$. We recall that $X^\vee\times \Spec(R/\pi^n R)$ coincides with $\big(X\times \Spec(R/\pi^n R)\big)^\vee$. Thus, according to our hypothesis, by applying inductively \cite[Lemma (2.3.2)]{OortFinite} to the small surjection $R/\pi^2 R\rightarrow k$ and then to the small surjection $R/\pi^3 R\rightarrow R/\pi^2 R$ and so on, we obtain a sequence of ample line bundles $L_n$ over $X_n=\mathfrak{X}_n$ such that $L_{n+1}$ lifts $L_n$  and $\varphi_{L_{n+1}}$ coincides with $\lambda\times \Spec(R/\pi^{n+1} R)$ for each $n\in \mathbb{N}$ (for this conclusion one may need to use \cite[Lemma (2.3.1), diagram (2)]{OortFinite} besides \cite[Lemma (2.3.2)]{OortFinite}).

To complete the proof, it remains only to show that the Mittag-Leffler condition in \cite[II, Exercise 9.6]{HartshorneAlgebraic} is available in our situation: because by applying  \cite[II, Exercise 9.6]{HartshorneAlgebraic} to the sequence $(L_n)_{n\in \mathbb{N}}$ of invertible sheaves, we then obtain an invertible sheaf $L$ on $\mathfrak{X}$  lifting $L'$, as was to be found (see also \cite[II, Proposition 9.6(b)]{HartshorneAlgebraic}).  Finally, setting $\mathscr{G}:=\pi\mathcal{O}_X$, the Mittag-Leffler condition for the inverse system  $$\big(\Gamma(\mathfrak{X}_{n},\mathcal{O}_{\mathfrak{X}_{n}})\big)_{n\in \mathbb{N}}=\big(\Gamma(X_n,\mathcal{O}_{X_{n}})\big)_{n\in\mathbb{N}}=\big(\Gamma(X,\mathcal{O}_{X}/\mathscr{G}^n)\big)_{n\in\mathbb{N}}$$ holds in light of \cite[Tag 02OB]{StacksProject}.
\end{proof}

The next lemma is the last preparatory fact  we need for the proof of Theorem \ref{TheoremAbelianSurface}.

\begin{lemma}\label{LemmaVanishingOfHigherCohomologies}
Let $X$ be an Abelian variety over a field $k$ and $L$ be an ample line bundle on $X$ with a non-zero global section. Then $H^i(X,L)=0$ for any $i\neq 0$.
\end{lemma}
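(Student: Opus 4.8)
The plan is to prove that for an ample line bundle $L$ on an Abelian variety $X$ over a field $k$, if $H^0(X,L)\neq 0$ then $H^i(X,L)=0$ for all $i\neq 0$. The key fact is the classical description of the cohomology of line bundles on Abelian varieties coming from the theory of the theta group and the Mumford index, together with the vanishing theorem of Mumford--Kempf. Recall that for any nondegenerate line bundle $L$ (and an ample line bundle is automatically nondegenerate, since $\varphi_L$ is an isogeny), there is a unique integer $i(L)$, the \emph{index} of $L$, with $0\le i(L)\le \dim X$, such that $H^j(X,L)=0$ for all $j\neq i(L)$ and $H^{i(L)}(X,L)\neq 0$. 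So the only thing to check is that, under the hypothesis $H^0(X,L)\neq 0$, the index $i(L)$ is $0$.

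The cleanest route is: since $L$ admits a nonzero global section, $H^0(X,L)\neq 0$, hence $i(L)=0$ by the uniqueness part of the Mumford index theorem (the nonvanishing cohomology sits in the single degree $i(L)$, and here it is nonvanishing in degree $0$). Therefore all higher cohomology vanishes. First I would recall the reference for the Mumford index theorem --- for instance \cite[\S 16]{Mumford} (the ``Riemann--Roch'' and vanishing theorems for line bundles on abelian varieties) or \cite[Chapter 9]{EdixhovenGeerMoonenAbelian} --- which is valid over an arbitrary field (one may reduce to $k$ algebraically closed by flat base change, since cohomology commutes with flat base change and both hypothesis and conclusion are insensitive to such an extension). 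Then I would spell out that ampleness gives nondegeneracy of $L$, invoke the theorem to get the single index $i(L)$, and conclude $i(L)=0$ from the existence of the nonzero section.

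Alternatively, and perhaps more self-contained, one can argue directly with the isogeny $\varphi_L:X\to X^\vee$ and the Fourier--Mukai / pullback trick: for an ample $L$, $[n]^*L$ is again ample with a huge space of sections for $n\gg 0$, and using Serre vanishing on the abelian variety plus the fact that multiplication by $n$ is finite flat of degree $n^{2g}$ one transfers vanishing down; but this is essentially reproving the index theorem, so I would only sketch it as a remark. The main (and really the only) obstacle is purely expository: making sure the cited vanishing theorem is quoted in the correct generality (arbitrary base field, $L$ merely ample with a section rather than very ample), and justifying the reduction to an algebraically closed field so that the Mumford index formalism applies verbatim. There is no substantive computation; the content is the citation and the one-line deduction that a nonzero degree-zero section forces the index to vanish.
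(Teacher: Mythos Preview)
Your proposal is correct and essentially the same as the paper's proof: both reduce to an algebraically closed base field via flat base change and then invoke Mumford's vanishing/index theorem for nondegenerate line bundles, concluding $i(L)=0$ from $H^0(X,L)\neq 0$. The only cosmetic difference is that the paper verifies nondegeneracy by writing $L_{\overline{k}}=\mathcal{O}(D)$ for an effective $D$ and applying \cite[Application~1, p.~60]{Mumford} to see $K(L_{\overline{k}})$ is finite, whereas you invoke directly that ampleness forces $\varphi_L$ to be an isogeny.
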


\begin{proof}
Let $\overline{k}$ be the algebraic closure of $k$ and set $X_{\overline{k}}:=X\times_k\overline{k}$ which is an Abelian variety over $\overline{k}$. Let $f:X_{\overline{k}}\rightarrow X$ be the projection and set $L_{\overline{k}}:=f^*(L)$ which is a line bundle over $X_{\overline{k}}$. 
The projection $X_{\overline{k}}\rightarrow X$ is obviously an affine morphism and consequently $L_{\overline{k}}$ is also ample in view of \cite[Tag 0892]{StacksProject}.


Let $K(L_{\overline{k}})=\{x\in X_{\overline{k}}(\overline{k}):t_{x}^*(L_{\overline{k}})\cong L_{\overline{k}}\}$ be the subgroup of $X_{\overline{k}}(\overline{k})$ as defined in \cite[Definition, page 60]{Mumford}. 
By our hypothesis, $L$ and thus $L_{\overline{k}}$ has a non-zero global section.  In particular, $L_{\overline{k}}=\mathcal{O}_{X_{\overline{k}}}(D)$ for some effective Cartier divisor $D$ on $X_{\overline{k}}$ (\cite[II, Proposition 7.7(a)]{HartshorneAlgebraic}). Therefore, \cite[Application 1, page 60]{Mumford} implies that $K(L_{\overline{k}})$ is finite. This fact as well as \cite[The vanishing theorem, page 150]{Mumford} implies that $H^i(X_{\overline{k}},L_{\overline{k}})=0$ for $i\neq 0$, because $H^0(X_{\overline{k}},L_{\overline{k}})\neq 0$. Then \cite[Tag 02KH(2)]{StacksProject} implies the statement.
\end{proof}

Now we are ready to present the main results of this section.

\begin{theorem}
\label{TheoremAbelianSurface}
Let $X$ be an Abelian variety of dimension $d\ge 2$ over a field $k$ of arbitrary characteristic and let
$L$ be   any ample line bundle on $X$.
\begin{enumerate}[(i)]


\item
\label{itm:SectionRingOFAbelianSurface}
Let $D$ be a $\mathbb{Q}$-Weil divisor on $X$ such that $ND$ is integral and $\mathcal{O}_X(ND)=L$ for some $N\ge 1$. The non-Cohen-Macaulay normal $(d+1)$-dimensional generalized section ring
$$
R(X,D):=\bigoplus_{i\ge 0}H^0\big(X,\mathcal{O}_X(\lfloor iD\rfloor)\big)
$$
admits a graded small Cohen-Macaulay module (so does the section ring $R(X,L)$ itself). 

\item 
\label{itm:AbelianNonAdmittingSCMalgebra}
The completion $\widehat{R(X,L)}$ of $R(X,L)$ along the maximal ideal $R(X,L)_+$ does not admit a small Cohen-Macaulay algebra.

\item 
\label{itm:AbelianMixedCharacteristic}
Further assume that either $k$ has prime characteristic	 $p \ge 5$, or it is a perfect field of characteristic $p=3$.
Then, up to replacing $L$ with some sufficiently large power  $L^t$ of it $(t\in \mathbb{N})$, there exists a $(d+2)$-dimensional complete normal local domain $(R,\fm)$, which is flat over the discrete valuation ring $W(k)[\sqrt{p}]$, such that $R/\sqrt{p}R \cong \widehat{R(X,L)}$. In particular, $R$ does not admit any small Cohen-Macaulay algebra, but it admits a small Cohen-Macaulay module.
\end{enumerate}
\end{theorem}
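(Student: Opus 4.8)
\emph{Part (i): the section ring and its small Cohen--Macaulay module.}
First I would record the local cohomology of the generalized section ring $R := R(X,D)$ along $\fm := R_+$. By the standard dictionary between finitely generated graded modules over a section ring and coherent sheaves on $\Proj R = X$ (cf. \cite{GotoWatanabeOnGraded}) one has $H^j_{\fm}(R)_{[n]} \cong H^{j-1}\big(X,\mathcal{O}_X(\lfloor nD\rfloor)\big)$ for $j \ge 2$, together with $H^0_{\fm}(R)=0$ (as $R$ is a domain) and $H^1_{\fm}(R)=0$ (the defining property of a section ring identifies $R_n$ with $H^0(X,\mathcal{O}_X(\lfloor nD\rfloor))$). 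For $n\ge 1$ the divisor $\lfloor nD\rfloor$ is ample with a nonzero section (an ample line bundle on an abelian variety is effective, \cite{Mumford}), so Lemma \ref{LemmaVanishingOfHigherCohomologies} kills $H^{j-1}(X,-)$ for $j-1\ne 0$; for $n\le -1$, Serre duality and $K_X\cong\mathcal{O}_X$ reduce to the previous case and leave only the top degree $j=d+1$; and for $n=0$ one gets $H^j_{\fm}(R)_{[0]} \cong H^{j-1}(X,\mathcal{O}_X)\cong\wedge^{j-1}H^1(X,\mathcal{O}_X)$. Hence $R$ is a $(d+1)$-dimensional generalized Cohen--Macaulay normal graded domain (normality being the usual normality of the section ring of an ample $\mathbb{Q}$-divisor on a smooth variety) whose only non-top local cohomology modules are the finite-dimensional spaces $\wedge^{j-1}H^1(X,\mathcal{O}_X)$, placed in degree $0$, for $2\le j\le d$; since $\wedge^{d-1}H^1(X,\mathcal{O}_X)\ne 0$ (here $d\ge 2$), $R$ is \emph{not} Cohen--Macaulay. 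For the small Cohen--Macaulay module I would choose a line bundle $P\in\Pic^0(X)$ with $P\not\cong\mathcal{O}_X$ and set $M := \bigoplus_{i\ge 0}H^0\big(X,\mathcal{O}_X(\lfloor iD\rfloor)\otimes P\big)$, a finitely generated graded $R$-module. The crucial input is that a nontrivial numerically trivial line bundle on an abelian variety has vanishing cohomology in \emph{every} degree; running the computation above with this (each $L^{\pm n}\otimes P^{\pm 1}$ is again ample with a section, while $P$, $P^{-1}$ have no cohomology at all) gives $H^j_{\fm}(M)=0$ for all $j\le d$, so $\depth M = d+1 = \dim R$ and $M$ is a graded small Cohen--Macaulay module. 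Passing from the $\mathbb{Q}$-divisor statement to the integral one via $R(X,D)^{(N)}=R(X,L)$ and checking the same vanishing on the finitely many residue classes of $\lfloor iD\rfloor$ is routine.

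\emph{Part (ii): no small Cohen--Macaulay algebra for $\widehat{R(X,L)}$.}
By part (i), $\widehat{R(X,L)}$ is a complete normal non-Cohen--Macaulay domain. In characteristic $0$ this already forces the conclusion: a module-finite Cohen--Macaulay $\widehat{R(X,L)}$-algebra $T$ is equidimensional, some minimal prime quotient $T/P$ contracts to $0$ and is a module-finite domain extension, the normalized trace splits $\widehat{R(X,L)}\hookrightarrow T/P$, hence $\widehat{R(X,L)}$ would be a direct summand of a Cohen--Macaulay module, forcing it to be Cohen--Macaulay --- a contradiction. In characteristic $p>0$ I would invoke Bhatt's criterion, Theorem \ref{non-existence}: it suffices to exhibit a nonzero slope-$<1$ part of $H^i_{\mathrm{rig}}(X/W(k)[\frac{1}{p}])$ for some $0<i<\dim X = d$. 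Take $i=1$ (legitimate since $d\ge 2$); by the isomorphism $(\ref{WittVectorCohSeq})$ this reduces to $H^1(X,W\mathcal{O}_X)[\frac{1}{p}]\ne 0$, equivalently to $H^1_{\mathrm{crys}}(X/W(k))$ having a Frobenius slope $<1$. But $H^1_{\mathrm{crys}}(X/W(k))$ is the Dieudonn\'e module of the $p$-divisible group $X[p^\infty]$, whose Newton polygon runs from $(0,0)$ to $(2d,d)$ (and is symmetric), so its initial slope is $\le\frac{1}{2}<1$; hence the slope-$<1$ part is nonzero and Bhatt's criterion applies \cite{ManinTheTheory}.

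\emph{Part (iii): the mixed characteristic deformation.}
Writing the polarization as $\varphi_{L'}$ for $L' = L$ (up to a translation), Norman's Theorem \ref{NormanThm} lifts $(X,\lambda')$ to a polarized abelian scheme $h\colon(\mathcal{X},\lambda)\to\Spec V$, where $V := W(k)[\sqrt{p}]$ is a complete ramified discrete valuation ring with uniformizer $\sqrt{p}$ and residue field $k$, and Lemma \ref{LemmaLiftingPolizarationAndLiftingOfLineBundles} lifts $L$ to an ample line bundle $\mathcal{L}$ on $\mathcal{X}$. Since $h$ is flat, projective and surjective with geometrically integral closed fiber $X$, and $H^1(X,L^n)=0$ for all $n\ge 1$ by Lemma \ref{LemmaVanishingOfHigherCohomologies}, Theorem \ref{non-existence1}\ref{itm:FirstPart} gives a Noetherian graded ring $R(\mathcal{X},\mathcal{L})$ with $R(\mathcal{X},\mathcal{L})/(\sqrt{p})\cong R(X,L)$; I set $R := \widehat{R(\mathcal{X},\mathcal{L})}$, so $R/\sqrt{p}R\cong\widehat{R(X,L)}$. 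Then $R$ is a $(d+2)$-dimensional complete normal local domain ($\mathcal{X}$ is regular, so $R(\mathcal{X},\mathcal{L})$ is an excellent normal domain and $\dim R=\dim\mathcal{X}+1$), it is $V$-flat because each $H^0(\mathcal{X},\mathcal{L}^n)$ is $\sqrt{p}$-torsion-free, and $\sqrt{p}$ is a nonzerodivisor on $R$ as well as on any module-finite Cohen--Macaulay $R$-algebra $T$ (such a $T$ is equidimensional and every minimal prime contracts to $0$ in the domain $R$, hence misses $\sqrt{p}$). Thus killing $\sqrt{p}$ would turn a small Cohen--Macaulay $R$-algebra $T$ into a small Cohen--Macaulay $R/\sqrt{p}R=\widehat{R(X,L)}$-algebra, contradicting part (ii); so $R$ admits no small Cohen--Macaulay algebra. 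For the small Cohen--Macaulay module I would deform $M$: the point $P\in\Pic^0(X)$ lifts to $\mathcal{P}\in\Pic^0_{\mathcal{X}/V}(V)=\mathcal{X}^\vee(V)$ because the dual abelian scheme $\mathcal{X}^\vee$ is smooth over the complete local ring $V$, and $\mathcal{P}$ is an honest line bundle on $\mathcal{X}$ (pull back the Poincar\'e bundle). Since $H^1$ of $\mathcal{L}^n\otimes\mathcal{P}$ vanishes on both fibers of $h$ (Lemma \ref{LemmaVanishingOfHigherCohomologies} for $n\ge 1$, absence of cohomology of a nontrivial numerically trivial bundle for $n=0$), $\mathcal{L}^n\otimes\mathcal{P}$ is cohomologically flat in degree $0$, so $\mathcal{M} := \bigoplus_{n\ge 0}H^0(\mathcal{X},\mathcal{L}^n\otimes\mathcal{P})$ is a finitely generated $R(\mathcal{X},\mathcal{L})$-module with $\mathcal{M}/\sqrt{p}\mathcal{M}\cong M$; as $\sqrt{p}$ is a nonzerodivisor on $\mathcal{M}$ and $M$ is small Cohen--Macaulay over $\widehat{R(X,L)}$, the completion of $\mathcal{M}$ is a small Cohen--Macaulay $R$-module.

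\emph{Main obstacle.} The genuinely delicate step is the choice of module in part (i): one must find a twist of the section ring that is actually Cohen--Macaulay, and the point that makes it work --- and that is easy to overlook --- is that on an abelian variety a nonzero line bundle numerically equivalent to zero has \emph{no} cohomology whatsoever (not merely vanishing higher cohomology), which is exactly what cancels the degree-$0$ obstruction $\wedge^{j-1}H^1(X,\mathcal{O}_X)$. Once this is in hand, part (ii) is a standard trace argument in equal characteristic $0$ combined with a citation of Bhatt's criterion and the symmetry of the Newton polygon in characteristic $p$, and part (iii) is a formal propagation of $M$ along the Norman lift via cohomology and base change, closely parallel to the already-established Theorem \ref{non-existence1}\ref{itm:FirstPart}.
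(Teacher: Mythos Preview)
Your approach to parts (ii) and (iii) is essentially the same as the paper's (trace in characteristic $0$, Bhatt's criterion via the Newton polygon of $H^1_{\mathrm{crys}}$ in characteristic $p$, then Norman's lift plus cohomology-and-base-change). Where you diverge substantially is in the construction of the small Cohen--Macaulay module in part (i): the paper does \emph{not} twist by a point of $\Pic^0$. Instead it chooses $k_0$ with $L^{k_0}$ very ample, pulls $D$ back along the multiplication-by-$M$ isogeny $M_X\colon X\to X$ (for $M$ a suitable multiple of $N$), so that $M_X^*D$ becomes an integral very ample divisor, and then takes $\bigoplus_{n\in\mathbb{Z}}H^0\big(X,L^{k_0}\otimes\mathcal{O}_X(M_X^*D)^n\big)$ as a module over $R(X,M_X^*D)$, which is module-finite over $R(X,D)$ because $M_X$ is finite. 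The point of the ample twist $L^{k_0}$ is exactly to kill the degree-$0$ obstruction you identified, but without leaving the realm of ample bundles; in part (iii) the paper simply lifts this whole picture along Norman's deformation.

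Your $\Pic^0$-twist idea is elegant, and for the integral section ring $R(X,L)$ it works cleanly \emph{once you have a nontrivial $k$-rational point $P\in X^\vee(k)$}. That is the first gap: the statement is over an arbitrary field, and there exist abelian varieties over finite fields with $X^\vee(k)=\{0\}$ (already for $d=2$ take a product of two copies of the elliptic curve over $\mathbb{F}_2$ with a single rational point), so no such $P$ need exist. The second gap is in the $\mathbb{Q}$-divisor case, which you call ``routine'': your claim that $\lfloor nD\rfloor$ is ample for every $n\ge 1$ is false in general (e.g.\ on $E_1\times E_2$ take $D=\tfrac12 F_1+F_2$ with $F_i$ a fiber; then $\lfloor D\rfloor=F_2$ is nef but degenerate), and for such degenerate residue classes the twist $\mathcal{O}_X(\lfloor nD\rfloor)\otimes P$ has vanishing cohomology only if $P$ restricts nontrivially to the abelian subvariety $K(\lfloor nD\rfloor)^0$. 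Arranging this simultaneously for all finitely many residues is a genuine condition on $P$, not an automatic one, and again may be impossible over a small ground field. Both gaps can be closed by first making a finite extension $k'/k$ large enough that a suitably generic $P\in X^\vee(k')$ exists --- then $R(X_{k'},D_{k'})$ is module-finite over $R(X,D)$ and your module descends as a small Cohen--Macaulay module --- but this step is not ``routine'' and should be spelled out. The paper's multiplication-map approach sidesteps both issues by never leaving ample line bundles, at the cost of a slightly less direct construction.
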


\begin{proof}
\ref{itm:SectionRingOFAbelianSurface}
First we note that as $H^1(X,\mathcal{O}_X)\neq 0$ by \cite[Corollary 2, page 129]{Mumford} and \cite[Tag 02KH(2)]{StacksProject}, $R(X,D)$ is not Cohen-Macaulay (see \cite[Propostion 2.2]{WatanabeSomeRemarks}). 

To observe that $R(X,D)$ admits a maximal Cohen-Macaulay module, we may and we do choose some $k_0\in \mathbb{N}$ such that $L^{k}$ and $(-1_X)^*L^k$ are very ample for $k\ge k_0$ (we recall that the inverse morphism $-1_X:X\rightarrow X$ is an automorphism).\footnote{On Abelian varieties over an algebraically closed field, such a power is not that large (\cite[Theorem 163]{Mumford}).}  Then we fix some arbitrary $N'\ge k_0$. Set $M:=2NN'$ and  let $M_X:X\rightarrow X$ be the endomorphism on $X$ corresponding to the multiplication by $M$, which is a finite locally free morphism by \cite[Tag 0BFG]{StacksProject}.  Before going on, we should  contemplate  $M^*_XD$.

As $X$ is nonsingular (\cite[Tag 0BFC and Tag 056S]{StacksProject}), every Weil divisor on $X$ is Cartier and every rank $1$ coherent reflexive sheaf of $\mathcal{O}_X$-modules	 is invertible. Therefore, we	  have a finite sum presentation   $$
D=\sum\limits_{i}p_i/q_iD_i
$$
of $D$, where each $D_i$ is a Cartier divisor on $X$, and $p_i,q_i$ are non-zero coprime integers with $q_i\ge 1$.

For the notion of the pullback of a Cartier divisor along a morphism, we refer to \cite[Definition (21.4.2)]{GrothendieckEGAIVIV} (whose existence is conditional in contrast to the pullback of general invertible sheaves). Appealing to the flatness of $M_X$ as well as  \cite[Proposition (21.4.5)]{GrothendieckEGAIVIV}, every Cartier divisor on $X$ has a pullback along $M_X$. In other words, we are provided with the endomorphism  $M_X^*:\text{Div}(X)\rightarrow \text{Div}(X)$ on the ordered  group of the Cartier divisors on $X$, which extends naturally to an endomorphism on the group of rational Cartier divisors $$M^*_X:\text{Div}(X)\otimes_{\mathbb{Z}}\mathbb{Q}\rightarrow \text{Div}(X)\otimes_{\mathbb{Z}}\mathbb{Q}.$$ Moreover, by \cite[(21.4.2.1), page 266]{GrothendieckEGAIVIV}, this group homomorphism $M^*_X$ is so that the diagram 
\begin{center}$\begin{CD}
\text{Div}(X) @>>> \Pic(X)\\
@VM^*_X VV @VV M^*_X V\\
\text{Div}(X) @>>> \Pic(X),
\end{CD}$\end{center}
is commutative, where its horizontal maps are the canonical group homomorphism  Div($X$)$\rightarrow $Pic($X$), inducing the canonical group isomorphism $\text{Div}(X)/\text{Div.princ}(X)\overset{\cong}{\rightarrow}\text{Pic}(X)$ (\cite[Proposition (21.3.4)(b)]{GrothendieckEGAIVIV}).  From \cite[Corollary 3, page 59]{Mumford} or \cite[Tag 0BFF]{StacksProject} and the above diagram, we get 
$$
M_X^*D\overset{(\text{Div.princ}(X))\otimes_{\mathbb{Z}}\mathbb{Q}}{\equiv} (M(M+1)/2)D+(M(M-1)/2)(-1_X)^*(D),
$$ 
\begin{equation}
\label{EquationPullbackOfCartierDivisor}
\mathcal{O}_X(M^*_XD)\cong L^{N'(M+1)}\otimes (-1_X)^*(L)^{N'(M-1)}.
\end{equation} 

In particular, $M_X^*D$ is an integral divisor and it is very ample by our choice of $M$ and \cite[II, Exercise 7.5(d)]{HartshorneAlgebraic}. So the section ring $R(X,M^*_XD)$ is a  Noetherian ring. Also, it is not hard to verify that the finiteness of $M_X:X\rightarrow X$ implies that the ring map on the generalized section rings $$R(X,D)\rightarrow R(X,M_X^*D),$$ induced by $M_X$, is module-finite, as the Proj of any generalized section ring of any ample $\mathbb{Q}$-Weil divisor on $X$ coincides with $X$; for the case of ample (integral) divisors, one can alternatively deduce this from \cite[Lemma 1.3]{MoriGradedFactorial} (which is a more general result as it has no ample/proper assumption) in conjunction with \cite[Tag 02JJ]{StacksProject}. 

Therefore, it suffices to show that $R(X,M_X^*D)$ admits a maximal Cohen-Macaulay module. We show that the $R(X,M_X^*D)$-module
$$
R(X,L^{k_0},M_X^*D):=\bigoplus_{n\in \mathbb{Z}} H^0\big(X,L^{k_0}\otimes \mathcal{O}_X(M_X^*D)^n\big)
$$
is maximal Cohen-Macaulay, or equivalently $H^i\big(X,L^{k_0}\otimes \mathcal{O}_X(M_X^*D)^n\big)=0$ for each $n\in \mathbb{Z}$ and $1\le i<d$ (see \cite[Proposition 2.1.5]{GrothendieckEGAIII} for the equivalence). Using \cite[II, Exercise 7.5(d)]{HartshorneAlgebraic} and by 
our choice of $N',k_0$  as well as (\ref{EquationPullbackOfCartierDivisor}),  $$L'_n:=L^{k_0}\otimes \mathcal{O}_X(M_X^*D)^n$$ 
is evidently very ample  for any $n\ge 0$.
So from Lemma \ref{LemmaVanishingOfHigherCohomologies}, we get $H^i(X,L'_n)=0$ for $n\ge 0$ and $1\le i<d$.
Also for the case where $n<0$, using  Serre duality theorem together with the fact that the canonical divisor of $X$ is the trivial line bundle (because  K\"ahler differentials of $X$ is the trivial vector bundle by \cite[Tag 047I]{StacksProject}), we have
$$
H^i(X,L'_n)\cong 
H^{d-i}\Big(X,L^{(-n)N'(M+1)-k_0}\otimes (-1_X)^*(L)^{(-n)N'(M-1)}\Big)
$$
which vanishes for all $1\le i<d$ by exactly the same reason as above. So $R(X,L^{k_0},M_X^*D)$ provides us with a maximal Cohen-Macaulay $R(X,D)$-module, as was to be proved.

\ref{itm:AbelianNonAdmittingSCMalgebra}
If $k$ has characteristic zero, then the statement is well-known in view of the normality of section rings, using the normalized trace map.  Moreover  by a base change to the algebraic closure of $k$ if necessary, without loss of generality  we can assume that $k$ is a perfect field. So we assume that $k$ is a perfect field of  prime characteristic.

 To verify that the completion $\widehat{R(X,L)}$ of $R(X,L)$ does not admit a small Cohen-Macaulay algebra, it amounts to checking the assumption of Theorem \ref{non-existence}. Namely, $H^1_{\rm{rig}}(X/W(k)[\frac{1}{p}])_{<1} \ne 0$. A proof for this needed non-vanishing is outlined in \cite[Remark 3.2]{SmallCM}.\footnote{See Remark \ref{AlbaneseVar}.} For the case of Abelian surfaces, however, another reference exists: the required non-vanishing in dimension $2$ can be deduced by measuring the slope of the line segments in each of the three possible  Newton polygons (based on the $p$-rank of $X$) of the first crystalline cohomology that can occur for an Abelian surface $X$, as drawn in \cite[page 651]{IllusieComplexDe}; see also \cite[3.2. Newton and Hodge polygons]{LiedtkeLectures} for the definition of the Newton polygon.

\ref{itm:AbelianMixedCharacteristic} 
In view of the definition of the polarization, we can consider the pair $(X,\varphi_L)$ as a polarized Abelian variety. However, to fulfill the required assumption $H^1(X,L^n)=0$ for $n\ge 1$ as in the statement of Proposition \ref{crucial-isom}, we have to (and we do) replace $L$ with $L^t$ for some $t\gg 0$.

By Theorem \ref{NormanThm} and Lemma \ref{LemmaLiftingPolizarationAndLiftingOfLineBundles}, there exists a polarized Abelian scheme $h:(\mathcal{X},\lambda) \to \Spec\big(W(k)[\sqrt{p}]\big)$ such that $(\mathcal{X},\lambda)$ specializes to $(X,\varphi_{L})$ along the special fiber of $h$ as well as  a line bundle $\mathcal{L}$ on $\mathcal{X}$  lifting $L$. Now the rest of the proof can be completed by applying 
Proposition \ref{crucial-isom}. Note that for sufficiently large $M$, following the arguments given in the proof of part \ref{itm:SectionRingOFAbelianSurface}, $R(\mathcal{X},\mathcal{L})\rightarrow R(\mathcal{X},M_{\mathcal{X}}^*\mathcal{L})$ is a finite ring map (which specializes to the finite ring map $R(X,L)\rightarrow R(X,M_{X}^*L)$) and the $R(\mathcal{X},M_{\mathcal{X}}^*\mathcal{L})$-module $R(\mathcal{X},\mathcal{L}^{k_0},M_{\mathcal{X}}^*\mathcal{L})$, which lifts the maximal Cohen-Macaulay $R(X,L)$-module $R(X,L^{k_0},M_{X}^*L)$, provides us with a maximal Cohen-Macaulay $R(\mathcal{X},\mathcal{L})$-module.
\end{proof}

\begin{remark}
\label{AlbaneseVar}
\begin{enumerate}[(i)]
\item\label{itm:AlbaneseVarNonExistenceOfSMC}
Let $X$ be a smooth proper variety over an algebraically closed field $k$ of characteristic $p>0$. Then one can attach an Abelian variety $\Alb(X)$ (called the Albanese variety) and a morphism $\alpha:X \to \Alb(X)$ (the algebraically closedness of $k$ is imposed here to ensure us that $X$ has a $k$-rational point or a $0$-cycle of degree $1$). Then by \cite[p. 171]{Enriques1} or \cite[Example 1.8(2)]{LiedtkeLectures}, we have
$$
H^1_{\rm{cris}}(X/W(k)) \cong H^1_{\rm{cris}}(\Alb(X)/W(k)), 
$$
which, in particular, says that the first crystalline cohomology is a torsion free $W(k)$-module. If the first \'etale Betti number of $X$ vanishes, then $\dim \Alb(X)=0$ and $H^1_{\rm{rig}}(X/W(k))$ vanishes (see \cite[Theorem 0.9.17]{Enriques1}). Using these facts as well as Lemma \ref{LemmaBlowUpAndSegreProduct},
we can provide an alternative proof of a result in \cite[Example 5.2 and Example 5.3]{SannaiSinghGalois}. Indeed, let $X:=E \times \mathbf{P}_k^1$, where $E$ is the Fermat cubic curve, where the characteristic of $k$ is different from $3$. Then $X$ is a smooth projective surface and we have $\Alb(X) \cong \Alb(E) \times \Alb(\mathbf{P}_k^1)$ (see \cite[p. 10-05, p. 10-10]{Serre} for the decomposition and the universality of Albanese varieties). Since $\Alb(\mathbf{P}_k^1)$ is a point, we get $H^1_{\rm{cris}}(X/W(k)) \cong H^1_{\rm{cris}}(\Alb(E)/W(k)) \cong H^1_{\rm{cris}}(E/W(k))$. This is a torsion free $W(k)$-module of rank $2$. By the possibilities of the Newton/Hodge polygon (see \cite[3.4.1]{LiedtkeLectures}), or by \cite[Remark 3.2]{SmallCM}, we deduce that the slope $<1$ part of 
\begin{align*}
	H^1_{\text{rig}}(X/W(k)[1/p])&\cong H^1_{\text{cris}}(X/W(k))\otimes_{W(k)} W(k)[1/p]&\\&\cong H^1_{\text{cris}}(E/W(k))\otimes_{W(k)}W(k)[1/p]
\end{align*}
is nonzero, where the rigid cohomology is identified with crystalline cohomology after tensoring with the field of fractions of $W(k)$. Thus, Theorem \ref{non-existence} shows that the homogeneous coordinate ring of $X$ (this is the same thing as the Segre product: $k[X,Y,Z]/(X^3+Y^3+Z^3) \# k[s,t]$) does not admit a graded module-finite extension that is maximal Cohen-Macaulay.

\item
Recently, it is established that any globally $F$-split normal projective surface admits a flat lifting over the Witt vectors in \cite{LiftingF-split}. This is a singular example admitting a $p$-adic deformation. We are interested in testing this example to provide a new example of a complete local domain without small Cohen-Macaulay algebras.
\end{enumerate}
\end{remark}

The final result of this paper is an immediate corollary of Theorem \ref{TheoremAbelianSurface}.

\begin{corollary}\label{CorollaryAbelian}
Let $R$ be a normal  Noetherian  $\mathbb{N}_0$-graded ring  over a field $k=R_{[0]}$. If $\Proj(R)$ is an Abelian variety over $k$, then $R$ admits a graded small Cohen-Macaulay module.
\end{corollary}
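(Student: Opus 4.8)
Put $X:=\Proj(R)$, which by hypothesis is an Abelian variety over $k$; set $d:=\dim X$, so that $\dim R=d+1$. The plan is to identify $R$, up to a graded isomorphism, with a generalized section ring of the kind treated in Theorem~\ref{TheoremAbelianSurface}\ref{itm:SectionRingOFAbelianSurface}, and then to quote that theorem. First I would record that $R$ is a normal \emph{domain}: since $R$ is normal it is a finite product of normal domains, while every idempotent of a graded ring is homogeneous of degree $0$, so a nontrivial product decomposition would force a nontrivial idempotent into $R_{[0]}=k$; moreover, being Noetherian with $R_{[0]}$ a field, $R$ is a finitely generated $k$-algebra. I would also dispose of the low-dimensional range $d\le 1$ at once --- there Theorem~\ref{TheoremAbelianSurface} does not apply, but $\dim R\le 2$ and a normal Noetherian ring satisfies Serre's condition $(S_2)$, which for rings of dimension $\le 2$ is the same as being Cohen--Macaulay, so $R$ is its own graded small Cohen--Macaulay module.

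Assume henceforth $d\ge 2$. Here I would invoke Demazure's description of normal graded rings (cf.\ \cite{WatanabeSomeRemarks}, and the generalities in \cite{GotoWatanabeOnGraded}): there is an ample $\mathbb{Q}$-Weil divisor $D$ on $X$ --- automatically $\mathbb{Q}$-Cartier since $X$ is smooth --- together with a graded isomorphism
$$
R\;\cong\;R(X,D)=\bigoplus_{i\ge 0}H^0\big(X,\mathcal{O}_X(\lfloor iD\rfloor)\big).
$$
Choosing $N\ge 1$ with $ND$ integral and setting $L:=\mathcal{O}_X(ND)$, which is an ample line bundle on the Abelian variety $X$ of dimension $d\ge 2$, Theorem~\ref{TheoremAbelianSurface}\ref{itm:SectionRingOFAbelianSurface} applied to this $X$, $L$ and $D$ produces a graded small Cohen--Macaulay $R(X,D)$-module; transporting it along the displayed isomorphism gives a graded small Cohen--Macaulay $R$-module, which is what we want.

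I do not expect a genuinely hard step here: the entire content is Theorem~\ref{TheoremAbelianSurface}\ref{itm:SectionRingOFAbelianSurface} together with the structure theory of normal graded rings. The one point requiring care --- and the reason for making the reductions above explicit --- is to confirm that Demazure's hypotheses hold (that $R$ is a Noetherian normal graded domain with $R_{[0]}$ a field) and that the divisor $D$ it produces is ample; the ampleness of $D$ is exactly where the Noetherian, equivalently finite-type, hypothesis on $R$ enters.
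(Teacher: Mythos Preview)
Your proposal is correct and follows essentially the same route as the paper: invoke Demazure's description of normal graded rings to write $R\cong R(X,D)$ for an ample $\mathbb{Q}$-divisor $D$ on $X=\Proj(R)$, and then apply Theorem~\ref{TheoremAbelianSurface}\ref{itm:SectionRingOFAbelianSurface}. You are in fact more careful than the paper's one-line proof, since you separately verify that $R$ is a domain and dispose of the low-dimensional case $d\le 1$ (where Theorem~\ref{TheoremAbelianSurface} does not apply but normality already gives Cohen--Macaulayness).
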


\begin{proof}
By virtue of \cite[Theorem, page 51]{DemazureAnneaux}, there exists an ample $\mathbb{Q}$-Weil divisor $D$ on $X=\Proj(R)$ such that $R=R(X,D)$ (see also \cite[Remark 2.6(i)]{ShimomotoTavanfarRemarks}). Hence the statement follows from Theorem \ref{TheoremAbelianSurface}\ref{itm:SectionRingOFAbelianSurface}.
\end{proof}

In Theorem \ref{TheoremAbelianSurface}, using an Abelian surface, we have constructed a complete local normal domain $(R,\fm)$ of positive characteristic $p>0$ in dimension 3 such that $R$ does not admit a small Cohen-Macaulay algebra. However, since every complete local normal domain in dimension 2 is automatically Cohen-Macaulay, our method using the $p$-adic deformation is not sufficient to construct a complete local normal domain of mixed characteristic in dimension 3 that has no small Cohen-Macaulay algebras. We propose the next problem.

\begin{Problem}
Does every complete local normal domain of mixed characteristic in dimension 3 admit a small Cohen-Macaulay algebra?
\end{Problem}

\section{Acknowledgment}
The authors are grateful to an anonymous reader who pointed out to us Example \ref{ExamplePrimeNonExistenceExample}. The authors are also grateful to Bhargav Bhatt for an enlightening comment on Remark \ref{AlbaneseVar}\ref{itm:AlbaneseVarNonExistenceOfSMC}.

\end{document}